\newcommand{\comw}{\textcolor{white}}
\newcommand{\vta}{\textcolor{violet}}
\newcommand{\homotopy}[3] 
{
	\xymatrix{{#1} \ar@2{~>}@<0.25ex>[r]^{{#2}} & {#3}}
}
\newcommand{\Hom}{\mbox{Hom}}
\newcommand{\Ho}{\cc{H}\mbox{o}(\cc{C},\cc{W})}
\numberwithin{equation}{section}
\newlength\Colsep
\theoremstyle{definition}
        \newtheorem{remark}[equation]{Remark}
        \newtheorem{sinnadastandard}[equation]{\textcolor{white}{-}\hspace{-.3cm}}
\theoremstyle{plain}
        \newtheorem{theorem}[equation]{Theorem}
        \newtheorem{lemma}[equation]{Lemma}
        \newtheorem{proposition}[equation]{Proposition}
        \newtheorem{corollary}[equation]{Corollary}
        \newtheorem{remarkitalica}[equation]{Remark}
	    \newtheorem{definition}[equation]{Definition}
        \newtheorem{sinnadaitalica}[equation]{\textcolor{white}{-}\hspace{-.3cm}}
\newcommand{\cqd}{\hfill$\Box$}  
\newcommand{\circlearrow}{}
\DeclareRobustCommand{\circlearrow}{%
  \mathrel{\vphantom{\rightarrow}\mathpalette\circle@arrow\relax}%
}
\newcommand{\circle@arrow}[2]{%
  \m@th
  \ooalign{%
    \hidewidth$#1\circ\mkern1mu$\hidewidth\cr
    $#1\longrightarrow$\cr}%
}
\newcommand{\xRightarrow}[2][]{\ext@arrow 0359\Rightarrowfill@{#1}{#2}}
\newcommand{\xLeftarrow}[2][]{\ext@arrow 0359\Leftarrowfill@{#1}{#2}}
\newcommand{\mr}[1]{\stackrel{#1}{\longrightarrow}}
\newcommand{\Mr}[1]{\stackrel{#1}{\Rightarrow}}
\newcommand{\Ml}[1]{\stackrel{#1}{\Leftarrow}}
\newcommand{\xr}[1]{\xrightarrow{#1}}
\newcommand{\Xr}[1]{\xRightarrow{#1}}
\newcommand{\Xl}[1]{\xLeftarrow{#1}}
\newcommand{\cc}[1]{\mathcal{#1}}
\newcommand{\C}{\mathcal{C}}
\newcommand{\eps}{\varepsilon}
\newcommand{\adj}[2]{
\ar@/^1ex/[r]^{{#1}}
\ar@{}[r]|{\bot}
\ar@/_1ex/@{<-}[r]_{{#2}} }
\newcommand{\jda}[2]{
\ar@/^1ex/[r]^{#1}
\ar@{}[r]|{\top}
\ar@/_1ex/@{<-}[r]_{#2} }
\newcommand{\adjbis}[2]{
\ar@/^2ex/[r]^{{#1}}
\ar@{}[r]|{\bot}
\ar@/_2ex/@{<-}[r]_{{#2}} }
\newcommand{\jdabis}[2]{
\ar@/^2ex/[r]^{#1}
\ar@{}[r]|{\top}
\ar@/_2ex/@{<-}[r]_{#2} }
\newcommand{\idavuelta}[2]{
\ar@/^1ex/[r]^{#1}
\ar@/_1ex/@{<-}[r]_{#2} }
\newcommand{\hpy}
           {
            \hspace{-1ex}
            \xymatrix{ {} \ar@2{~>}@<0.25ex>[r] & {} }
            \hspace{-0.6ex}
            }
\newcommand{\mrhpy}[1]
            {\stackrel{#1}{\hpy}}
\newcommand{\dcell}[1]  
          {
					 \ar@<8pt>@{-}[d]+<-4pt,8pt>
           \ar@<-8pt>@{-}[d]+<4pt,8pt>
           \ar@{}[d]|{#1}
          }
\newcommand{\dcellb}[1]   
          {
           \ar@<10pt>@{-}[d]+<-5pt,8pt>
           \ar@<-10pt>@{-}[d]+<5pt,8pt>
           \ar@{}[d]|{#1}
          }
\newcommand{\dcellbbis}[1]   
{
	\ar@<15pt>@{-}[d]+<-5pt,8pt>
	\ar@<-15pt>@{-}[d]+<5pt,8pt>
	\ar@{}[d]|{#1}
}
\newcommand{\dcellbymedio}[1]   
          {
           \ar@<15pt>@{-}[d]+<-7.5pt,10pt>
           \ar@<-15pt>@{-}[d]+<7.5pt,10pt>
           \ar@{}[d]|{#1}          
          }
\newcommand{\dcellbis}[1]   
          {
           \ar@<15pt>@{-}[d]+<-7.5pt,0pt>
           \ar@<-15pt>@{-}[d]+<7.5pt,0pt>
           \ar@{}[d]|{#1}          
          }          
\newcommand{\dcellbymediobis}[1]   
          {
           \ar@<15pt>@{-}[d]+<-7.5pt,-5pt>
           \ar@<-15pt>@{-}[d]+<7.5pt,0pt>
           \ar@{}[d]|{#1}          
          }
\newcommand{\deq}        
         {
          \ar@{=}[d]
         }
\newcommand{\ddeq}{\ar@{=}[dd]}
\newcommand{\dddeq}{\ar@{=}[ddd]}         
\newcommand{\ddddeq}{\ar@{=}[dddd]}         
\newcommand{\dddddeq}{\ar@{=}[ddddd]}                  
\newcommand{\ddddddeq}{\ar@{=}[dddddd]}                           
\newcommand{\dddddddeq}{\ar@{=}[ddddddd]}         
\newcommand{\ddddddddeq}{\ar@{=}[dddddddd]}         
\newcommand{\dddddddddeq}{\ar@{=}[ddddddddd]}                  
\newcommand{\ddddddddddeq}{\ar@{=}[dddddddddd]}                  
\newcommand{\dddddddddddeq}{\ar@{=}[ddddddddddd]}                  
\newcommand{\dreq}       
         {
          \ar@{=}[dr]
         }
\newcommand{\dleq}       
         {
          \ar@{=}[dl]
         }
\newcommand{\dccell}[1]    
          {
           \ar@{-}[ld]
           \ar@{-}[rd]
           \ar@{}[d]|{#1}
          }
\newcommand{\dcellbb}[1]   
          {
           \ar@<20pt>@{-}[d]+<-10pt,12pt>
           \ar@<-20pt>@{-}[d]+<10pt,12pt>
           \ar@{}[d]|{#1}
          }
\newcommand{\dl}    
          {
           \ar@<-2pt>@{-}[d]+<4pt,8pt>
          }
\newcommand{\dr}    
          {
           \ar@<2pt>@{-}[d]+<-4pt,8pt>
          }
\newcommand{\drbis}    
          {
           \ar@<-2pt>@{-}[d]+<-4pt,8pt>
          }
\newcommand{\drmediobis}    
          {
           \ar@<-1pt>@{-}[d]+<-4pt,8pt>
          }
\newcommand{\dc}[1]    
          {
           \ar@{}[d]|{#1}
          }
\newcommand{\dcr}[1]    
          {
           \ar@{}[dr]|{#1}
          }
\newcommand{\dcl}[1]    
          {
           \ar@{}[dl]|{#1}
          }
\newcommand{\uccell}[1]      
          {
           \ar@{-}[ur]
           \ar@{}[u]|{#1}
           \ar@{-}[ul]
          }
\newcommand{\uccellb}[1]     
          {
           \ar@<-1ex>@{-}[ur]
           \ar@{}[u]|{#1}
           \ar@<1ex>@{-}[ul]
          }
\newcommand{\dcellop}[1]  
          {
					 \ar@<6pt>@{-}[d]+<6pt,8pt>
           \ar@<-6pt>@{-}[d]+<-6pt,8pt>
           \ar@{}[d]|{#1}
          }
\newcommand{\dcellopb}[1]  
          {
					 \ar@<7pt>@{-}[d]+<7pt,8pt>
           \ar@<-7pt>@{-}[d]+<-7pt,8pt>
           \ar@{}[d]|{#1}
          }
\newcommand{\dcellopbb}[1]  
          {
					 \ar@<8pt>@{-}[d]+<8pt,8pt>
           \ar@<-8pt>@{-}[d]+<-8pt,8pt>
           \ar@{}[d]|{#1}
          }
\newcommand{\did}{\ar@2{-}[d]}
\newcommand{\ddid}{\ar@2{-}[dd]}
\newcommand{\dig}{ \ar@2{-}[d] & & }
\newcommand{\op}[1]
          {
           \ar@{-}[ld]
           \ar@{-}[rd]
           \ar@{}[d]|{#1}
          }
\newcommand{\opbis}[1]
          {
           \ar@{-}[ld]
           \ar@{-}[rd]
           \ar@{}[d]|>>>>{#1}
          }
\newcommand{\opb}[1]
          {
           \ar@<-2pt>@{-}[ld]
           \ar@<2pt>@{-}[rd]
           \ar@{}[d]|{#1}
          }        
\newcommand{\opmediob}[1]
          {
           \ar@<-1pt>@{-}[ld]
           \ar@<1pt>@{-}[rd]
           \ar@{}[d]|{#1}
          }  
\newcommand{\opbymedio}[1]
          {
           \ar@<-3pt>@{-}[ld]
           \ar@<3pt>@{-}[rd]
           \ar@{}[d]|{#1}
          }     
\newcommand{\opbb}[1]
          {
           \ar@<-4pt>@{-}[ld]
           \ar@<4pt>@{-}[rd]
           \ar@{}[d]|{#1}
          }               
\newcommand{\opbbb}[1]
          {
           \ar@<-6pt>@{-}[ld]
           \ar@<6pt>@{-}[rd]
           \ar@{}[d]|>>{#1}
          } 
\newcommand{\opbbbbis}[1]
          {
           \ar@<-6pt>@{-}[ld]
           \ar@<6pt>@{-}[rd]
           \ar@{}[d]|<<<<<{#1}
          } 
\newcommand{\opunodos}[1]
          {
           \ar@{-}[ld]
           \ar@{-}[rrd]
           \ar@{}[dr]|{#1}
          }
\newcommand{\opunodosb}[1]
          {
           \ar@<-2pt>@{-}[ld]
           \ar@<2pt>@{-}[rrd]
           \ar@{}[dr]|{#1}
          }
\newcommand{\opdosuno}[1]
          {
           \ar@{-}[lld]
           \ar@{-}[rd]
           \ar@{}[d]|{#1}
          }          
\newcommand{\opdosdos}[1]
          {
           \ar@{-}[lld]
           \ar@{-}[rrd]
           \ar@{}[d]|{#1}
          }    
\newcommand{\opdostres}[1]
          {
           \ar@{-}[lld]
           \ar@{-}[rrrd]
           \ar@{}[d]|{#1}
          }    
\newcommand{\optresuno}[1]
          {
           \ar@{-}[llld]
           \ar@{-}[rd]
           \ar@{}[d]|{#1}
          }   
\newcommand{\optresdos}[1]
          {
           \ar@{-}[llld]
           \ar@{-}[rrd]
           \ar@{}[d]|{#1}
          }    
\newcommand{\optrestres}[1]
          {
           \ar@{-}[llld]
           \ar@{-}[rrrd]
           \ar@{}[d]|{#1}
          }            
\newcommand{\opcincocinco}[1]
          {
           \ar@{-}[llllld]
           \ar@{-}[rrrrrd]
           \ar@{}[d]|{#1}
          }
\newcommand{\cl}[1]
          {
           \ar@{-}[ur]
           \ar@{}[u]|{#1}
           \ar@{-}[ul]
          }
\newcommand{\clb}[1]
          {
           \ar@<-1ex>@{-}[ur]
           \ar@{}[u]|{#1}
           \ar@<1ex>@{-}[ul]
          }
\newcommand{\clbb}[1]
          {
           \ar@<-2ex>@{-}[ur]
           \ar@{}[u]|{#1}
           \ar@<2ex>@{-}[ul]
          }
\newcommand{\clmediob}[1]
          {
           \ar@<-.5ex>@{-}[ur]
           \ar@{}[u]|{#1}
           \ar@<.5ex>@{-}[ul]
          } 
\newcommand{\clrightb}[1]
          {
           \ar@<-1ex>@{-}[ur]
           \ar@{}[u]|{#1}
           \ar@{-}[ul]
          }
\newcommand{\clunodos}[1]
          {
           \ar@{-}[urr]
           \ar@{}[u]|{#1}
           \ar@{-}[ul]
          }
\newcommand{\cldosuno}[1]
          {
           \ar@{-}[ur]
           \ar@{}[u]|{#1}
           \ar@{-}[ull]
          }
\newcommand{\cldosdos}[1]
          {
           \ar@{-}[urr]
           \ar@{}[u]|{#1}
           \ar@{-}[ull]
          }         
\newcommand{\cltresdos}[1]
          {
           \ar@{-}[urr]
           \ar@{}[u]|{#1}
           \ar@{-}[ulll]
          }           
\newcommand{\cldostres}[1]
          {
           \ar@{-}[urrr]
           \ar@{}[u]|{#1}
           \ar@{-}[ull]
          }     
\newcommand{\cltrestres}[1]
          {
           \ar@{-}[urrr]
           \ar@{}[u]|{#1}
           \ar@{-}[ulll]
          }            
\newcommand{\clcincocinco}[1]
          {
           \ar@{-}[urrrrr]
           \ar@{}[u]|{#1}
           \ar@{-}[ulllll]
          }
\newcommand{\ardr}{\ar@{-}[dr]}
\newcommand{\ardrr}{\ar@{-}[drr]}
\newcommand{\ardrrr}{\ar@{-}[drrr]}
\newcommand{\ardrrrr}{\ar@{-}[drrrr]}
\newcommand{\ardl}{\ar@{-}[dl]}
\newcommand{\ardll}{\ar@{-}[dll]}
\newcommand{\ardlll}{\ar@{-}[dlll]}
\newcommand{\ardllll}{\ar@{-}[dllll]}
\newcommand{\ardlllll}{\ar@{-}[dlllll]}
\newcommand{\cellrdE}[3] 
{\xymatrix@C=7ex@R=2.4ex{
		\ar@<1.6ex>[r]^{#1} 
		\ar@{}@<-1.3ex>[r]^{\!\! {#2} \, \!\Downarrow}
		\ar@<-1.1ex>[r]_{#3} & }
}
\newcommand{\cellrdEcorta}[3] 
{\xymatrix@C=5ex@R=2.4ex{
		\ar@<1.6ex>[r]^{#1} 
		\ar@{}@<-1.3ex>[r]^{\!\! {#2} \, \!\Downarrow}
		\ar@<-1.1ex>[r]_{#3} & }
}
\newcommand{\cellrd}[3] 
 {
  \xymatrix@C=7ex@R=2.4ex
         {
          {} \ar@<1.6ex>[r]^{#1}
             \ar@{}@<-1.3ex>[r]^{\Downarrow \; {#2}}
             \ar@<-1.1ex>[r]_{#3}
          & {}
         }
}
\newcommand{\cellrdb}[3] 
 {
  \xymatrix@C=7ex@R=2.4ex
         {
          {} \ar@<1.9ex>[r]^{#1}
             \ar@{}@<-1.3ex>[r]^{\Downarrow \; {#2}}
             \ar@<-1.1ex>[r]_{#3}
          & {}
         }         
 }
 \newcommand{\scellrd}[3] 
 {
  \xymatrix@C=4.5ex@R=2.4ex
         {
          {} \ar@<1.6ex>[r]^{#1}
             \ar@{}@<-1.3ex>[r]^{\!\! \Downarrow \, {#2}}
             \ar@<-1.1ex>[r]_{#3}
          & {}
         }
}
 \newcommand{\modif}[3] 
 {
  \xymatrix@C=7ex@R=2.4ex
         {
          {} \ar@<1.6ex>@{=>}[r]^{#1}
             \ar@{}@<-1.3ex>@{=>}[r]^{\!\! {#2} \, \!\downarrow}
             \ar@{}@<-1.1ex>[r]_{#3}
          & {}
         }
 }
\newcommand{\cellld}[3] 
 {
  \xymatrix@C=6ex@R=2.4ex
         {
            {}
          & {} \ar@<1.0ex>[l]^{#3}
          \ar@{}@<-1.7ex>[l]^{\!\! {#2} \, \!\Downarrow}
	                                 \ar@<-1.7ex>[l]_{#1}
         }
 }
\newcommand{\cellpairrd}[4] 
 {
  \xymatrix@C=8ex@R=2.2ex
         {
          {} \ar@<1.6ex>[r]^{#1}
             \ar@{}@<-1.3ex>[r]^{\!\! \Downarrow \, {#2} 
                                 \;\;\; \Downarrow \, {#3} }
             \ar@<-1.1ex>[r]_{#4}
          & {}
         }
 }
\begin{document}
\title{A localization of bicategories via homotopies}

\author{Descotte M.E., Dubuc E.J., Szyld M.}

\date{\vspace{-5ex}}

\maketitle

\begin{abstract}
Given a bicategory $\C$ and a family $\cc{W}$ of arrows of $\C$, we give conditions on the pair $(\C,\cc{W})$ that allow us to construct the bicategorical localization with respect to $\cc{W}$ by dealing only with the 2-cells, that is without adding objects or arrows to $\C$.
We show that in this case, the 2-cells of the localization can be given by the homotopies with respect to $\cc{W}$, a notion defined in this article which is closely related to Quillen's notion of homotopy for model categories but depends only on a single family of arrows.  
{This localization result has a natural application to the construction of the homotopy bicategory of a model bicategory, which we develop elsewhere, as the pair $(\C_{fc},\cc{W})$ given by the weak equivalences between fibrant-cofibrant objects satisfies the conditions given in the present article.}
\end{abstract}

\section{Introduction} \label{sec:intro}

The subject of this article is the localization of a bicategory $\C$, that is the process of making a family $\cc{W}$ of arrows of $\C$ into equivalences in an appropriate universal sense. 
As far as we know, this situation was first considered in \cite{PRONK2}, where a bicategorical version of the calculus of fractions of \cite{GZ} is given and a localization construction is performed in this case. {This amounts to a pseudofunctor $\C \mr{i} \C[\cc{W}^{-1}]$ which is universal in the following sense: for any bicategory $\cc{D}$, precomposition with $i$, \mbox{$\Hom(\cc{E},\cc{D}) \mr{i^*} \Hom_{\cc{W}}(\C,\cc{D})$} is a biequivalence of bicategories.
Note that here $\Hom_{\cc{W}}(\C,\cc{D})$ stands for a subbicategory of $\Hom(\C,\cc{D})$ considered in \cite{PRONK2}. Its objects are the pseudofunctors that map the arrows of $\cc{W}$ to equivalences, and its arrows are the natural transformations between them that, when interpreted as pseudofunctors from $\C$ into a cylinder category, again map the arrows of $\cc{W}$ to equivalences (see \cite[Th. 21]{PRONK2} for details).}

\vspace{1ex}

As a motivation, let us consider also the example of the homotopy category of a model category \cite{Quillen}.
The homotopy category of a given model category $\C$ is its localization with respect to the weak equivalences, and a construction of it is given in \cite{Quillen} in which the arrows are given by the homotopy classes of arrows of $\C$. 
As is well-known, the localization of a category always exists and can be constructed by {\em adding formal inverses}, that is by identifying classes of zigzags; however this construction is unmanageable in practice. This is a motivation for the constructions in \cite{GZ}, where zigzags of length 2 suffice, and in \cite{Quillen}, where the {\em candidates} for the  inverses are already present in the model category and  the localization can be constructed as a quotient (see also 
\mbox{\cite[10.6]{DHKS}}, or
\mbox{\cite[\S 3.1]{S.paper11}} for a detailed explanation of this situation in an abstract context).

This paper deals with the situation analogous to that of \cite{DHKS}, \cite{S.paper11}, that is the construction of the localization as a quotient, but in dimension 2. This amounts to constructing a localizing bicategory which has the same objects and arrows as the original bicategory. All the difficulty is thus in the 2-cells of this bicategory, which should in a sense include at the same time the original 2-cells of $\C$ and new 2-cells corresponding to a notion of homotopy.
 
Note that if one starts with a category as a trivial bicategory,  the quotient category given by the homotopy relation is 
obtained from our localizing bicategory by applying $\pi_0$ (connected components). 

For an arbitrary bicategory $\C$ and a family $\cc{W}$ of arrows, we 
consider a notion of \mbox{homotopy} between arrows of $\C$, that is a bicategorical notion of homotopy which depends only on the family $\cc{W}$. 
The reader should be aware that homotopies have a direction and are in general non-invertible.  
In fact, the homotopies can be thought of as something that would be an actual $2$-cell if the arrows of $\cc{W}$ were equivalences, and when this is the case 
we can in fact associate to each homotopy $H$ a $2$-cell $\widehat{H}$. We can apply pseudofunctors to homotopies, and thus for any pseudofunctor $\C \mr{F} \cc{D}$ which maps the arrows of $\cc{W}$ to equivalences we can construct in this way a 2-cell $\widehat{FH}$ of $\cc{D}$.
The homotopies are the basic ingredient for the following construction which we introduce in this paper.

\smallskip

\noindent
{\bf The bicategory $\Ho$ and the 2-functor 
$\cc{C} \mr{i} \Ho.$} 
The objects and the arrows of $\Ho$ are those of $\cc{C}$. The 2-cells correspond to equivalence classes of paths of homotopies. Explicitly, a $2$-cell $f \Mr{} g\in\Ho$ is given by the class
$[H^n,\dotsc, H^2,H^1]$ of a finite sequence
$\xymatrix{f \ar@2{~>}@<0.25ex>[r]^{H^1} & f_1 \ar@2{~>}@<0.25ex>[r]^>>>>>>{H^2} & f_2 \dotsb f_{n-1} \ar@2{~>}@<0.25ex>[r]^>>>>>>{H^n} & g}$
 of homotopies, where 
$[H^n,\dotsc,H^{2},H^1] = [K^m,\dotsc,K^{2},K^1]$ if and only if, for every
pseudofunctor $\C \mr{F} \cc{D}$ that maps the arrows of $\cc{W}$ to equivalences, $\widehat{FH^n} \circ \dotsb  \widehat{FH^2} \circ \widehat{FH^1} = \widehat{FK^m} \circ \dotsb  \widehat{FK^2} \circ \widehat{FK^1}.$ 
There is a 2-functor $\C \mr{i} \Ho$, which is the identity on objects and arrows and maps a 2-cell $\mu$ of $\C$ to the class of a homotopy $I^{\mu}$ which satisfies that $\widehat{FI^\mu} = F\mu$ for any $F$ as above (such homotopies exist and are explicitly constructed).

\smallskip

We consider the full subbicategory $\Hom_{(\cc{W},\Theta)}(\C,\cc{D})$ of $\Hom(\C,\cc{D})$ in which the objects are the pseudofunctors that map the arrows of $\cc{W}$ to equivalences (note that, unlike the definition of $\Hom_{\cc{W}}$ given in \cite{PRONK2} and mentioned at the beginning of the present introduction, now the 1-cells are all pseudonatural transformations between these pseudofunctors). 
We prove the following fundamental fact regarding the 2-functor $i$ (Theorem \ref{teo:mediapuposta}, {which holds without any hypothesis on $\cc{W}$}):  precomposing with $i$, 
\mbox{$\Hom(\Ho,\cc{D}) \mr{i^*} \Hom(\C,\cc{D})$} is an isomorphism of bicategories between 
$\Hom_{(i\cc{W},\, \Theta)}(\Ho,\cc{D})$ and 
$\Hom_{(\cc{W},\Theta)}(\C,\cc{D})$. Thus it will be an isomorphism of bicategories between $\Hom(\Ho,\cc{D})$ and 
$\Hom_{(\cc{W},\Theta)}(\C,\cc{D})$ as soon as $i$ maps the arrows of $\cc{W}$ to equivalences.

\vspace{1ex}

Going back to the example of the 
homotopy category of a model category, a reason why in this case the candidates for inverses are present is that, as is well known, any weak equivalence between fibrant-cofibrant objects can be factored as a section followed by a retraction. We say that an arrow is split if it is either a retraction or a section.  Recall that $\cc{W}$ is said to satisfy the ``3 for 2" condition if, for any three arrows that satisfy $fg = h$, whenever two of them are in $\cc{W}$, so is the third. 

We have weakened these notions to formulate their adequate bicategorical versions, and we have shown (Proposition~\ref{prop:otramediapu}): if $\cc{W}$ satisfies 3 for 2, 
then any 
split arrow in $\cc{W}$
is mapped to an equivalence by $i$. 

We define a pseudofunctor $\C \mr{i} \cc{E}$ to be the localization of $\C$ with respect to $\cc{W}$ if it is universal in the following sense: for any bicategory $\cc{D}$, precomposition with $i$, \mbox{$\Hom(\cc{E},\cc{D}) \mr{i^*} \Hom_{(\cc{W},\Theta)}(\C,\cc{D})$} is a biequivalence of bicategories.
We say that this localization is \emph{strict} if these biequivalences are in fact isomorphisms. 
Combining the two results above, that is Theorem \ref{teo:mediapuposta} and Proposition~\ref{prop:otramediapu}, we obtain the main theorem of this article (Theorem~\ref{enfin}): If $\cc{W}$ satisfies 3 for 2 and each arrow of $\cc{W}$ can be written as a composition of split arrows in $\cc{W}$, then $\cc{C} \mr{i} \Ho$ is the strict localization of $\C$ with respect to $\cc{W}$.

{We note that this is a stronger result than what one may expect to get in a bicategorical context, since we get an isomorphism of bicategories and not just an equivalence or a biequivalence.  By the usual reasoning with universal properties, any other construction of the localization of $\C$ with respect to $\cc{W}$ will yield a biequivalent bicategory, not necessarily isomorphic.}


The following is an application we have for this construction of $\Ho$, and the main reason why we have developed it. 
The axioms of model category can be generalized in a natural way to define the notion of {\em model bicategory} (\cite{DDS.model_bicat}, \cite{TesisEmi}, \cite{2-pro}).
The theory of model bicategories has potential applications in the homotopy theory of topoi and in strong shape theory. 
Also, it is expected that this theory could provide a formal setting in which to develop $(\infty,2)$-category theory (see the introduction of \cite{PronkWarren}).
The pair $(\C_{fc},\cc{W})$ given by the weak equivalences between fibrant-cofibrant objects of a model bicategory $\C$ satisfies the hypothesis of 
Theorem~\ref{enfin}, and this allows for an application of this result to the construction of the homotopy bicategory of a model bicategory, which we develop in \cite{DDS.model_bicat}. 

\section{Preliminaries on bicategories}
While the theory of bicategories is nowadays well-established, we prefer to explicitly define its basic concepts in order to fix the notation that we will use throughout the paper.

A bicategory $\cc{C}$ consists of all the following:

\medskip

\noindent 1. A family of objects that we will denote by $X,Y,Z,\dotsc$.

\medskip

\noindent 2. For each pair of objects $X,Y\in\C$ a category $\cc{C}(X,Y)$ whose objects are the arrows $X \mr{f} Y$ of $\cc{C}$ and whose arrows are the 2-cells $\alpha: f \Rightarrow g$ between those arrows. Thus we have a vertical composition of $2$-cells which we denote by ``$\circ$", and identity $2$-cells ``$id_f$". We abuse the notation by denoting indistinctly $f \Mr{id_f} f$ or $f \Mr{f} f$, thus $f = id_f$ as 2-cells. Note that for any 2-cell $\alpha$ as above we have $\alpha \circ f = \alpha = g \circ \alpha$, and in particular $f \circ f = f$.
	
\medskip

\noindent 3. For each $X,Y,Z\in\C$, a functor $\cc{C}(Y,Z) \times \cc{C}(X,Y) \mr{} \cc{C}(X,Z)$. 
	This is a horizontal composition which we denote by ``$\ast\,$", for each configuration $\xymatrix{X \ar@<1.6ex>[r]^{f_1} 
		\ar@{}@<-1.3ex>[r]^{\!\! {\alpha} \, \!\Downarrow}
		\ar@<-1.1ex>[r]_{f_2} & Y      
		\ar@<1.6ex>[r]^{g_1} 
		\ar@{}@<-1.3ex>[r]^{\!\! {\beta} \, \!\Downarrow}
		\ar@<-1.1ex>[r]_{g_2} & Z}$ we have $g_1 \ast f_1 \Xr{\beta \ast \alpha} g_2 \ast f_2$.

\smallskip

All these data have to satisfy the following axioms:

\smallskip

\noindent {\bf H1.} For each $X \mr{f} Y \mr{g} Z\in\C$,  $id_g \ast id_f = id_{g \ast f}$.

\noindent {\bf H2.} For each configuration 
$\xymatrix{X \ar@<3ex>[r]^>>{f_1}_<<<{\alpha \, \Downarrow} 
             \ar[r]^>>{f_2}_<<<{\beta \, \Downarrow} 
             \ar@<-3ex>[r]^>>{f_3} &
 Y
\ar@<3ex>[r]^>>{g_1}_<<<{\gamma \, \Downarrow} 
             \ar[r]^>>{g_2}_<<<{\delta \, \Downarrow} 
             \ar@<-3ex>[r]^>>{g_3} & Z}$, 
$ (\delta \ast \beta) \circ (\gamma \ast \alpha) = 
(\delta \circ \gamma) \ast (\beta \circ \alpha)$. This is the ``Interchange law".

\smallskip     
       
In order to avoid parentheses, we consider ``$\,\ast\,$" more binding than ``$\,\circ\,$", thus 
\mbox{$(\delta \ast \beta) \circ (\gamma \ast \alpha)$} above could be written as $\delta \ast \beta \circ \gamma \ast \alpha$.

\medskip

\noindent 4. Finally, part of the structure of $\C$ is given by the identities, the unitors and the associator as follows:

\smallskip 

\noindent {\bf I.} For each $X\in\C$, we have a 1-cell $X\mr{id_X} X$.

\smallskip 

\noindent {\bf U.} For each $X \mr{f} Y\in\C$, we have invertible 2-cells 
$f \ast id_X \Mr{\lambda} f$, $id_Y \ast f \Mr{\rho} f$.

\smallskip 

\noindent {\bf A.} For each $W \mr{f} X \mr{g} Y \mr{h} Z\in\C$, we have an invertible 2-cell $f \ast (g \ast h) \Mr{\theta} (f \ast g) \ast h$.

\medskip

We will use these same letters $\theta,\rho, \lambda$ for any bicategory, and we will denote the inverses of these 2-cells also by the same letters. 
The unitors and the associators are required to satisfy the 
well-known pentagon and triangle identities (\cite[XII,6]{McL}) and are required to be natural in each of the variables. 
These naturalities are expressed by the following equalities of 2-cells which we record here for convenience: 

\smallskip
\noindent {\bf N$\lambda$.} For each $\xymatrix{X \ar@<1.6ex>[r]^{f} 
             \ar@{}@<-1.3ex>[r]^{\!\! {\alpha} \, \!\Downarrow}
             \ar@<-1.1ex>[r]_{g} & Y}\in\C$, 
$\lambda\circ \alpha \ast id_X =\alpha\circ \lambda $.
          
\smallskip 
\noindent {\bf N$\rho$.} For each $\xymatrix{X \ar@<1.6ex>[r]^{f} 
             \ar@{}@<-1.3ex>[r]^{\!\! {\alpha} \, \!\Downarrow}
             \ar@<-1.1ex>[r]_{g} & Y}\in\C$, 
$\rho \circ id_Y\ast \alpha=\alpha \circ \rho$. 
            
\smallskip              
\noindent {\bf N$\theta$.} For each configuration $\xymatrix{W \ar@<1.6ex>[r]^{f_1} 
             \ar@{}@<-1.3ex>[r]^{\!\! {\alpha} \, \!\Downarrow}
             \ar@<-1.1ex>[r]_{f_2} & X \ar@<1.6ex>[r]^{g_1} 
             \ar@{}@<-1.3ex>[r]^{\!\! {\beta} \, \!\Downarrow}
             \ar@<-1.1ex>[r]_{g_2} & Y \ar@<1.6ex>[r]^{h_1} 
             \ar@{}@<-1.3ex>[r]^{\!\! {\gamma} \, \!\Downarrow}
             \ar@<-1.1ex>[r]_{h_2} & Z}\in\C$, 
$\theta \circ \gamma \ast (\beta \ast \alpha)=
(\gamma  \ast \beta) \ast \alpha \circ \theta$.            

\begin{sinnadastandard} \label{sin:whiskerings}
As is well-known, in order to have a horizontal composition of general 2-cells it is enough to have horizontal compositions between an arrow and a 2-cell: 

Assume that for each 
$\xymatrix{X \ar[r]^{f} & Y 
\ar@<1.6ex>[r]^{g_1} 
             \ar@{}@<-1.3ex>[r]^{\!\! {\alpha} \, \!\Downarrow}
             \ar@<-1.1ex>[r]_{g_2} & Z}$, 
$\xymatrix{X \ar@<1.6ex>[r]^{f_1} 
             \ar@{}@<-1.3ex>[r]^{\!\! {\alpha} \, \!\Downarrow}
             \ar@<-1.1ex>[r]_{f_2} & Y \ar[r]^{g} & Z}\in\C$, 
we have $2$-cells  
$\xymatrix{X 
\ar@<1.6ex>[r]^{g_1 \ast f} 
             \ar@{}@<-1.3ex>[r]^{\!\! {\alpha  \ast  f} \, \!\Downarrow}
             \ar@<-1.1ex>[r]_{g_2 \ast f}             
             & Z}$ 
and
$\xymatrix{X 
\ar@<1.6ex>[r]^{g \ast f_1} 
             \ar@{}@<-1.3ex>[r]^{\!\! {g   \ast  \alpha} \, \!\Downarrow}
             \ar@<-1.1ex>[r]_{g \ast f_2}             
             & Z}$, 
subject to the axioms: 
             
\noindent {\bf W1.} For each 
$\xymatrix{X \ar@<1.6ex>[r]^{f_1} 
             \ar@{}@<-1.3ex>[r]^{\!\! {\alpha} \, \!\Downarrow}
             \ar@<-1.1ex>[r]_{f_2} & Y      
\ar@<1.6ex>[r]^{g_1} 
             \ar@{}@<-1.3ex>[r]^{\!\! {\beta} \, \!\Downarrow}
             \ar@<-1.1ex>[r]_{g_2} & Z}\in\C$,  
$
(g_2 \ast \alpha) \circ (\beta \ast f_1) = (\beta \ast f_2) \circ (g_1 \ast \alpha)
$.
             
\noindent {\bf W2.} For each $X \mr{f} Y \mr{g} Z\in\C$, $\, \, id_g \ast f = g \ast id_f = id_{g \ast f }$.

\noindent {\bf W3.} 
For each 
$\xymatrix{X \ar[r]^{f} & Y
\ar@<2.3ex>[r]^>>{g_1} 
             \ar[r]^>>{g_2}^<<<{\alpha \, \Downarrow}
             \ar@<-2.3ex>[r]^>>{g_3}^<<<{\beta \, \Downarrow} & Z}\in\C$, 
$\, (\beta \ast f)\circ(\alpha \ast f) = (\beta \circ \alpha) \ast f$. 

\hspace{2ex} For each 
$\xymatrix{X \ar@<2.3ex>[r]^<<{f_1}
             \ar[r]^<<{f_2}^>>>{\alpha \, \Downarrow}
             \ar@<-2.3ex>[r]^<<{f_3}^>>>{\beta \, \Downarrow} & Y
             \ar[r]^{g} & Z}\in\C$, 
$ \, (g \ast \beta) \circ (g \ast \alpha) = g \ast (\beta \circ \alpha)$.
            
\vspace{2ex}
            
Then these axioms allow to define, for each configuration as in {\bf W1}, the horizontal composition $\beta \ast \alpha$ by either one of the two compositions there. The correspondence between the sets of axioms ``{\bf H}" and ``{\bf W}" is thus clear. 

\end{sinnadastandard}

We will use this fact in order to define the horizontal composition of a bicategory by defining only the horizontal compositions of 2-cells with arrows. We note also that the axioms {\bf N}$\lambda$ and {\bf N}$\rho$ above involve only these sorts of compositions, and as for axiom {\bf N}$\theta$, it is an easy exercise to show that it is equivalent to the following three axioms, corresponding to putting two identity 2-cells out of the three $\alpha, \beta$ and $\gamma$ in {\bf N}$\theta$:
  
\noindent {\bf N$\theta$1.} For $\xymatrix{W \ar@<1.6ex>[r]^{f_1} 
             \ar@{}@<-1.3ex>[r]^{\!\! {\alpha} \, \!\Downarrow}
             \ar@<-1.1ex>[r]_{f_2} & X \ar[r]^{g} & Y \ar[r]^{h} & Z}\in\C$, $\theta \circ h \ast (g \ast \alpha)=(h \ast g) \ast \alpha \circ \theta$.            
            
\noindent {\bf N$\theta$2.} For $\xymatrix{W \ar[r]^{f} & X \ar@<1.6ex>[r]^{g_1} 
             \ar@{}@<-1.3ex>[r]^{\!\! {\beta} \, \!\Downarrow}
             \ar@<-1.1ex>[r]_{g_2} & Y \ar[r]^{h}& Z}\in\C$, $\theta \circ h \ast (\beta \ast f)=(h \ast \beta) \ast f \circ \theta$.
             
\noindent {\bf N$\theta$3.} For $\xymatrix{W \ar[r]^{f} & X \ar[r]^{g} & Y \ar@<1.6ex>[r]^{h_1} 
             \ar@{}@<-1.3ex>[r]^{\!\! {\gamma} \, \!\Downarrow}
             \ar@<-1.1ex>[r]_{h_2} & Z}\in\C$, $\theta \circ \gamma \ast (g \ast f)=(\gamma \ast g) \ast f \circ \theta$.

\vspace{2ex}

\noindent 
{\bf Coherence.} 
There is a well-known coherence theorem (see for example \cite{basicbicat}) which generalizes the coherence theorem for tensor categories. Given any sequence of composable arrows, the parentheses determine the order in which the compositions are performed. The coherence theorem states that the arrows resulting of any choice of parentheses (and adding or removing identities) are canonically isomorphic by a unique 2-cell built from the associators and the unitors. This justifies the following abuse of notation which greatly simplifies the computations: 

\begin{sinnadastandard} \label{sin:abuse} 
\emph{We write any horizontal composition of arrows omitting the parentheses and the identities. In this way, the associators and the unitors disappear in the diagrams of 2-cells.}
\end{sinnadastandard}

\smallskip

\noindent {\bf Elevator calculus.} 
In addition to the usual pasting diagrams, we will use the Elevator calculus \footnote{Developed in 1969 by the second author for draft use.} to write equations between 2-cells.
In this article, each elevator represents a composition of 2-cells in a bicategory. Objects are omitted, arrows are composed from right to left, and 2-cells from top to bottom. Axiom {\bf H2} shows that the correspondence between elevators and 2-cells is a bijection. Axiom {\bf W1} is the following basic equality for the elevator calculus.
\begin{equation} \label{ascensor}
\vcenter{\xymatrix@C=0ex
         {
             g_1 \dcell{\beta} & f_1 \did
          \\
             g_2 \did & f_1 \dcell{\alpha}
          \\
             g_2  &  f_2 
         }}
\vcenter{\xymatrix@R=6ex{\;\;\;=\;\;\;}}
\vcenter{\xymatrix@C=0ex
         {
             g_1 \did & f_1 \dcell{\alpha}
          \\
             g_1 \dcell{\beta} & f_2 \did
          \\
             g_2 & f_2 
         }}
\vcenter{\xymatrix@R=6ex{\;\;\;=\;\;\;}}
\vcenter{\xymatrix@C=0ex
         {f_2 \dcell{\beta} 
             & f_1 \dcell{\alpha} \\
             g_2 & g_1 
         }}
\end{equation}
This allows to move cells up and down when there are no obstacles, as if they were elevators.

\smallskip

\emph{Using the basic move \eqref{ascensor} we form configurations of cells that fit valid equations in order to prove new equations.}

\begin{definition} \label{def:pseudofunctor}
A pseudofunctor $\C \mr{F} \cc{D}$ between bicategories is given by a \mbox{family} of functors $\C(X,Y) \mr{F} \cc{D}(FX,FY)$, one for each pair of objects $X,Y$ of $\C$, invertible 2-cells $id_{FX} \Xr{\xi_X} F(id_X)$, one for each object $X$ of $\C$ and natural isomorphisms \mbox{$\ast \circ (F\times F) \Mr{\phi} F\circ \ast:\C(X,Y)\times \C(Y,Z) \mr{} \C(X,Z)$} with components \mbox{$Fg \ast Ff \Xr{\phi_{f,g}} F(g \ast f)$,} one for each triplet $X,Y,Z$ of objects of $\C$. As with the associators and unitors, we will omit the subindices of $\xi$ and $\phi$, and use the same letters for the inverses. The following equalities are required to hold:

\vspace{1ex}

For each $X \mr{f} Y\in \C$, \hspace{.2cm}
{\bf P1.} 
$\vcenter{\xymatrix@C=-2ex{Ff \did && \dcell{\xi} \\
Ff && Fid_X  \\
& Ff \cl{\phi} }}
\vcenter{\xymatrix@C=0ex{ \; = \; }}
\vcenter{\xymatrix@C=0ex{Ff \did \\ Ff }} \quad$
\hspace{.2cm}{\bf P2. } 
$\vcenter{\xymatrix@C=-2ex{\dcell{\xi} && Ff \did \\
Fid_X  && Ff \\
& Ff \cl{\phi} }}
\vcenter{\xymatrix@C=0ex{ \; = \; }}
\vcenter{\xymatrix@C=0ex{Ff \did \\ Ff }}$

\vspace{1ex}
 
For each $W \mr{f} X \mr{g} Y \mr{h} Z\in \C$, 
\hspace{.2cm}{\bf P3.} 
$\vcenter{\xymatrix@C=-2.5ex{Fh && Fg {\color{white}XX}  & Ff \did \\
& F(hg) \ar@{}[u]|{\phi} 
\ar@{-}[lu]
\ar@<.5ex>@{-}[ru]
&& Ff \\
&& F(hgf) \cl{\phi}   }}
\vcenter{\xymatrix@C=0ex{ \; = \; }}
\vcenter{\xymatrix@C=-2.5ex{Fh \did & {\color{white}XX} Fg && Ff \\
Fh && F(gf) \ar@{}[u]|{\phi} 
\ar@<-.5ex>@{-}[lu]
\ar@{-}[ru] \\
& F(hgf) \cl{\phi} }}$

\vspace{1ex}

We will often use the naturality of $\phi$, thus we make it explicit: 

For each $\xymatrix{X \ar@<1.6ex>[r]^{f_1} 
             \ar@{}@<-1.3ex>[r]^{\!\! {\alpha} \, \!\Downarrow}
             \ar@<-1.1ex>[r]_{f_2} & Y      \ar@<1.6ex>[r]^{g_1} 
             \ar@{}@<-1.3ex>[r]^{\!\! {\beta} \, \!\Downarrow}
             \ar@<-1.1ex>[r]_{g_2} & Z}\in \C$, \hspace{2ex}              
      {\bf N$\phi$.} 
$\vcenter{\xymatrix@C=-2ex{Fg_1 && Ff_1 \\
& F(g_1 f_1) \cl{\phi} \dcellbb{F(\beta\alpha)} \\
& F(g_2 f_2) }}
\vcenter{\xymatrix@C=0ex{ \; = \; }}
\vcenter{\xymatrix@C=-2ex{Fg_1 \dcellb{F\beta} && Ff_1 \dcellb{F\alpha} \\ 
Fg_2 && Fg_1 \\
& F(g_2 f_2) \cl{\phi} }}$

\vspace{1ex}

A 2-functor is a pseudofunctor such that all the 2-cells $\phi$ and $\xi$ are identities.
\end{definition}

\begin{definition} \ 
\label{pseudo-naturalentrepseudo-functors}
A pseudonatural transformation \mbox{$\theta:{F}\Rightarrow {G}:\cc{C} \rightarrow \cc{D}$} between pseudofunctors consists of a family of arrows 
$FX \xr{\theta_{X}} GX$, one for each $X\in \cc{C}$ 
and a family of invertible 2-cells 
$\vcenter{\xymatrix@C=-1ex{ Gf \dl & \dc{\theta_f} & \dr \theta_X \\
\theta_Y && Ff}}$, one for each $X \mr{f} Y \in \cc{C}$,
satisfying the following axioms:

\smallskip

\noindent {\bf PN0.} For each ${X}\in \C$, 
$\quad \vcenter{\xymatrix@C=0pc
	   {
		   \theta_{{X}} \did 
		& \dcell{\xi}  
		\\
		\theta_{{X}} 
	        &
	        {F}id_{X}
		}}
\vcenter{\xymatrix@C=0pc{\; = \; }}
\vcenter{\xymatrix@C=0pc
       {
         \dcell{\xi} 
	& 
	\theta_{X} \did
	\\
	{G}id_{X} \dl
	& \theta_{X} \ar@{}[dl]|{\theta_{id_{X}}}  \dr
	\\
        \theta_{X}
	&
	{F}id_{X}
	}}$
	
\vspace{1.5ex}

\noindent {\bf PN1.} For each ${X}\stackrel{{f}}\rightarrow {Y}\stackrel{{g}}\rightarrow {Z}\in \C$, 
$\quad \vcenter{\xymatrix@C=-0pc{
 	   {G}{g} \did	 
	   & & 
 	   {G}{f} \dl 
	   &&
 	   \theta_{X} \ar@{}[dll]|{\theta_{f}} \dr
 	   \\
 	   {G}{g} \dl
 	   & & 
 	   \theta_{Y} \ar@{}[dll]|{\theta_{g}} \dr
 	   &&
 	    {F}{f} \did
 	   \\ 
 	   \theta_{{Z}} \did
            & &  {F}{g} &&
            {F}{f} \\
            \theta_{{Z}} 
            &&&
            \!\!\!\!\! {F}{g}{f} \!\!\!\!\! \cl{\phi} &
            }}
 \vcenter{\xymatrix@C=-.4pc{\quad = \quad }}
\vcenter
   {
   \xymatrix@C=-0.5pc
        {{G}{g} 
 		 &&
 		 {G}{f} 
 		 & \quad & \theta_{X} \did 
 	   \\
 	   &
 		 {G}{g}{f} \cl{\phi}   \dl
 		&& &
 		\theta_{X} \ar@{}[dll]_{\theta_{{g}{f}}}  \dr
		\\
 		&
 		 \theta_{Z} 
 		& &&
 		 {F}{g}{f} 
 		}
   }
 $
 
\smallskip

\noindent {\bf PN2.} For each ${X}\cellrd{{f}}{\alpha}{{g}}{Y}\in \C$, $\quad\vcenter{\xymatrix@C=-0pc{
                      {G}{f} \dcellb{{G}\alpha}  
		      &&
		      \theta_{X}  \did 
		      \\
		       {G}{g} \dl
		       &&
		      \theta_{X} \dr \ar@{}[dll]|{\theta_{g}}
		      \\
		       \theta_{{Y}} 
		       && 
		      {F}{g} 
		      }}
     \vcenter{\xymatrix@C=-.4pc{\quad = \quad }}
      \vcenter{\xymatrix@C=-0pc{
		      {G}{f} \dl
		      &&
		      \theta_{X} \dr \ar@{}[dll]|{\theta_{f}} 
		      \\
		      \theta_{{Y}} \did
		      &&
		      {F}{f} \dcellb{{F}\alpha} 
		      \\ 
		      \theta_{Y}
		      &&
		      {F}{g} 
		      }}$
\end{definition}

As a special case, we have the notion of pseudonatural transformation between \mbox{2-functors.} A 2-natural transformation between 2-functors is a pseudonatural transformation such that $\theta_{f}$ is the equality for every arrow ${f}$ of $\cc{C}$.

\begin{definition} 
A \mbox{\emph{modification}} $\rho: \theta \rightarrow \eta: {F}\Rightarrow {G}:\cc{C} \rightarrow \cc{D}$ between 
pseudonatural transformations is a family
 of 2-cells $\theta_{X}\Xr{\rho_{X}}\eta_{X}$ of
$\cc{D}$, one for each $X\in \cc{C}$ such that:

\smallskip

\noindent {\bf PM.} For each $ X\mr{f} Y\in \C,
\; $\hspace{.2cm}
$\vcenter{\xymatrix@C=-0pc{
		      Gf \dl 
		      && 
		      \theta_{X} \dr \ar@{}[dll]|{\theta_f} 
		      \\
		      \theta_Y \dcellb{\rho_Y}
		      && 
		      Ff \did  
		      \\
		      \eta_Y
		      &&
		      Ff
		      }}
     \vcenter{\xymatrix@C=-.4pc{\quad = \quad }}
      \vcenter{\xymatrix@C=-0pc{
		      Gf \did 
		      && 
		      \theta_X \dcellb{\rho_X} 
		      \\
		      Gf \dl 
		      && 
		      \eta_X \dr \ar@{}[dll]|{\eta_f} 
		      \\
		      \eta_Y
		      && 
		      Ff
		      }}$
\end{definition}

Pseudofunctors, pseudonatural transformations and modifications can be composed in order to define, for each pair $\cc{C},\cc{D}$ of bicategories, a bicategory $\Hom(\cc{C},\cc{D})$. We omit the details as they are ubiquitous in the literature.

\begin{definition} \label{def:compositionsubF}
 Let $\cc{C} \mr{F} \cc{D}$ be a pseudofunctor. A configuration $\xymatrix{X \ar@<1ex>[r]^{f_1} 
             \ar@<-1ex>[r]_{f_2} & Y \ar@<1ex>[r]^{g_1} 
             \ar@<-1ex>[r]_{g_2} & Z}\in\cc{C}$ and another one $\xymatrix{FX \ar@<1.6ex>[r]^{Ff_1} 
             \ar@{}@<-1.3ex>[r]^{\!\! {\alpha} \, \!\Downarrow}
             \ar@<-1.1ex>[r]_{Ff_2} & FY \ar@<1.6ex>[r]^{Fg_1} 
             \ar@{}@<-1.3ex>[r]^{\!\! {\beta} \, \!\Downarrow}
             \ar@<-1.1ex>[r]_{Fg_2} & FZ}\in\cc{D}$ determine a $2$-cell 
             $F(g_1 \ast f_1) \Xr{\beta \ast_F \alpha} F(g_2 \ast f_2)$ as the composition $F(g_1 \ast f_1) \Mr{\phi} F(g_1) \ast F(f_1) \Xr{\beta \ast \alpha} F(g_2) \ast F(f_2) \Mr{\phi} F(g_2 \ast f_2)$. Note that if $F$ is a 2-functor, 
$\beta \ast_F \alpha = \beta \ast \alpha$.
\end{definition}

\begin{remarkitalica} \label{rem:circFacellsenImF}
Given a configuration $\xymatrix{X \ar@<1.6ex>[r]^{f_1} 
	\ar@{}@<-1.3ex>[r]^{\!\! {\alpha} \, \!\Downarrow}
	\ar@<-1.1ex>[r]_{f_2} & Y \ar@<1.6ex>[r]^{g_1} 
	\ar@{}@<-1.3ex>[r]^{\!\! {\beta} \, \!\Downarrow}
	\ar@<-1.1ex>[r]_{g_2} & Z}\in\C$, from Definition~\ref{def:compositionsubF} and axiom~{\bf N$\phi$} in Definition \ref{def:pseudofunctor}, it follows that $(F\beta) \ast_F (F\alpha) = F(\beta \ast \alpha)$. 
\end{remarkitalica}

\begin{sinnadastandard}
\label{sin:primerfactorizacion}
 {\bf Factorization of $F$.} 
Let $\cc{C} \mr{F} \cc{D}$ be a pseudofunctor.
We give now a factorization of $F$ which will be very useful later. We define a bicategory $\C_F$, a pseudofunctor $\C_F \mr{F_1} \cc{D}$ and a 2-functor $\C \mr{F_2} \C_F$ such that $F = F_1 F_2$.

 We define the 0 and 1-dimensional aspects of $\C_F$ (that is objects, arrows, identity arrows and horizontal composition of arrows) as the ones of $\C$. {We define the 2-cells of $\C_F$ by putting exactly one 2-cell $f \Mr{\tilde{\alpha}} g$ for each 2-cell $Ff \Mr{\alpha} Fg$ of $\cc{D}$}. Vertical composition of 2-cells is computed in $\cc{D}$, and $id_f$ in $\C_F$ is given by the 2-cell $id_{Ff}$ of $\cc{D}$. 
The composition $\tilde{\beta}\ast \tilde{\alpha}$ in $\C_F$ is given by $\beta \ast_F \alpha$ in Definition~\ref{def:compositionsubF}. 
The axioms {\bf H} follow immediately by the definition of $\,\ast_F\,$ and the corresponding axioms of $\cc{C}$. The unitors and associators of $\C_F$ are obtained by applying $F$ to the ones of $\C$, i.e. they are the 2-cells $\widetilde{F\lambda}$, $\widetilde{F\rho}$, $\widetilde{F\theta}$. Their naturalities and the pentagon and triangle identities all follow in a straightforward way from those of $\cc{C}$, composing when needed with the isomorphism $\phi$. We leave the necessary details to the reader.

The 2-functor $F_2$ is defined by the formulas $F_2X = X$, $F_2f = f$, $F_2\alpha = \widetilde{F\alpha}$. The pseudofunctor $F_1$ is defined by the formulas $F_1X = FX$, $F_1f = Ff$, $F_1\tilde{\alpha} = \alpha$. Its structural 2-cells $\xi$, $\phi$ are given by those of $F$.

This factorization has the following (universal) property: 

\begin{proposition} \label{prop:upoffactoriz}
For any other factorization $\C \mr{G_2} \cc{H} \mr{G_1} \cc{D}$ of $F$ that satisfies that the 0 and 1-dimensional {data} of $\cc{H}$ are those of $\C$ and that $G_2$ is a 2-functor which is the identity on objects and arrows, there is a unique  2-functor $E$ such that $F_2 = E G_2$ and 
$F_1 = E G_1$, as in the following diagram:
$$
\xymatrix
     {
      \cc{C} \ar[rr]^F 
             \ar[rd]^{G_2} 
             \ar@/_2ex/[ddr]_{F_2} 
    &
    & \cc{D}  
    \\
    & \cc{H}    \ar@{-->}[d]^{E}
                \ar[ur]^{G_1} 
    \\ 
    & \cc{C}_F  \ar@/_2ex/[uur]_{F_1}
     }
$$ 

\end{proposition}
\begin{proof}
At the level of objects and arrows, $EX = X$, $Ef = f$ is the only possible definition such that $F_2 = E G_2$.
Now, for each 2-cell 
$f \Mr{\alpha} g$ of {$\cc{H}$} the only possible definition of $E\alpha$ such that 
$F_1 E\alpha = G_1\alpha$ is $E\alpha = \widetilde{G_1 \alpha}$ (recall the definition of $F_1$ on 2-cells). Setting $E\alpha = \widetilde{G_1 \alpha}$, we must check that this determines a 2-functor. It is clear that $E$ preserves vertical compositions if and only if $G_1$ does, and for the horizontal composition we have that for 
$\xymatrix{X \ar@<1.6ex>[r]^{f_1} 
             \ar@{}@<-1.3ex>[r]^{\!\! {\alpha} \, \!\Downarrow}
             \ar@<-1.1ex>[r]_{f_2} & Y \ar@<1.6ex>[r]^{g_1} 
             \ar@{}@<-1.3ex>[r]^{\!\! {\beta} \, \!\Downarrow}
             \ar@<-1.1ex>[r]_{g_2} & Z}$ in $\cc{H}$, 
$E(\beta \ast \alpha) = E\beta \ast E\alpha$ in $\C_F$ if and only if $G_1(\beta \ast \alpha) = G_1\beta \ast_F G_1\alpha = 
\phi \circ (G_1\beta \ast G_1\alpha) \circ \phi$, which is precisely equation {\bf N}$\phi$ for $G_1$ (note that, since $G_2$ is a 2-functor, the structural 2-cells $\phi$ of $G_1$ are exactly the ones of $F$). 
We check now that for every 2-cell $\mu$ of $\C$, $EG_2\mu=F_2\mu$: we have that $F_2 \mu = \widetilde{F \mu} = \widetilde{G_1G_2\mu} = EG_2\mu$. 
Since both $EG_2$ and $F_2$ are 2-functors, we can conclude that they coincide. Also, as we mentioned before, the structural 2-cells of $G_1$ are the ones of $F$ which is also the case for $F_1$. Then, since $E$ is a 2-functor, the structural 2-cells of $F_1E$ and $G_1$ coincide and so we conclude that they are the same pseudofunctor which finishes  the proof.   
\end{proof}

\end{sinnadastandard}

\begin{sinnadastandard} \label{quasiequivalencias}
{\bf Equivalences and quasiequivalences.} 
An arrow $X \mr{f} Y$ of a bicategory is an {\em equivalence} if there exists an arrow $Y \mr{g} X$ (which we call a {\em quasiinverse} of $f$) and invertible 2-cells $g * f \cong id_X$, $f * g \cong id_Y$. It is well-known that these 2-cells can be taken to satisfy the usual triangular identities, and we will assume that this is the case when needed. It is also well-known that $X \mr{f} Y$ is an equivalence if and only if for every object $Z$ we have that the functor $\C(Z,X) \mr{f_*} \C(Z,Y)$ is an equivalence of categories, and if and only if for every $Z$ so is $\C(Y,Z) \mr{f^*} \C(X,Z)$. We denote the family of equivalences of a bicategory with the letter $\Theta$. We say that $f$ is a {\em quasiequivalence} if for every object $Z$ the functors $\C(Z,X) \mr{f_*} \C(Z,Y)$ and $\C(Y,Z) \mr{f^*} \C(X,Z)$ are full and faithful. 
Note that in this case invertible 2-cells are preserved and reflected by these functors. 
We denote the family of quasiequivalences of a bicategory with the letter $\Theta_q$. 


A pseudofunctor $\C \mr{F} \cc{D}$ is a {\em biequivalence of bicategories}
if there exist a pseudofunctor $\cc{D} \mr{G} \C$ (which we call a {\em bi-inverse} of $F$) and pseudonatural transformations $GF \Mr{\alpha} id_{\C}$, $FG \Mr{\beta} id_{\cc{D}}$ which are equivalences {(that is, its components $\alpha_X$, $\beta_X$ are equivalences in $\C$, $\cc{D}$).} {Note that in an equivalence of bicategories $\alpha_X$, $\beta_X$ are isomorphisms, and $G$ is called a {\em quasi-inverse}, while in an isomorphism of bicategories $\alpha_X$, $\beta_X$ are the identity arrows, 
and $G$ is the {\em inverse} of $F$}.
\end{sinnadastandard}

\begin{remark}
{Regarding the factorization $F = F_1 F_2$ of \ref{sin:primerfactorizacion}, for any $X \mr{f} Y$ in $\C$, 
if $Ff$ is a quasiequivalence, so is $F_2f$. We show that $(F_2f)_*$ is full and faithful, the case of $(F_2f)^*$  being dual. Let $g,h: Z \mr{} X$ and $f*g \Mr{\widetilde{\alpha}} f*h$, corresponding to $F(f*g) \Mr{\alpha} F(f*h)$. We have to show that there is a unique 2-cell $g \Mr{} h$ of $\C_F$ whose composition with $f$ equals $\widetilde{\alpha}$.
Consider the composition $\phi \alpha \phi: Ff * Fg \Mr{} Ff * Fh$. Since $Ff$ is a quasiequivalence, there is a unique $Fg \Mr{\beta} Fh$ such that $Ff * \beta = \phi \alpha \phi$, and it is easy to check that $\widetilde{\beta}$ is the unique desired 2-cell.}
\end{remark}

{This remark is false in general for equivalences, which is our reason for considering quasiequivalences in this paper. Since $F_2$ is always a 2-functor, this allows to consider 2-functors instead of arbitrary pseudofunctors in some parts of the paper, which simplifies the computations.}

\section{The homotopy bicategory}
\label{sec:sigmahomot}
We fix a bicategory $\cc{C}$ and a family $\cc{W}$ of arrows of $\C$ containing the identities. We will use the notation $\xymatrix{\cdot \ar[r]|{\circ} & \cdot}$ for the arrows of $\cc{W}$. We develop a theory of homotopies and cylinders with respect to the class $\cc{W}$ (instead of working with three distinguished classes as is the case for model categories).
The main result we show is that the homotopies form the 2-cells of a bicategory which, under two natural hypotheses on $\cc{W}$, is the localization of $\C$ with respect to $\cc{W}$, in the sense that it universally turns these arrows into equivalences. 
\begin{definition} \label{defcylestiloQuillen}
Let $X \in \C$. A cylinder $C$ (for $X$, with respect to $\cc{W}$) is given by the data $C = (W,Z,d_0,d_1,x,s,\alpha_0,\alpha_1)$, 
fitting in $\vcenter{\xymatrix@C=1.5pc@R=1.5pc{X \ar[rr]^{d_0} \ar[dd]_{d_1} \ar[rrdd]|{\comw{M^M} x \comw{M^M}} & \ar@{}[dr]|{\cong \; \Downarrow \; \alpha_0 \;} & W \ar[dd]|{\circ}^{s}\\
                   \ar@{}[dr]|{ \; \cong \; \Uparrow \; \alpha_1} && \\ 
                  W \ar[rr]|{\circ}_{s}  && Z}}$. We denote the invertible 2-cell $s\ast d_0 \Xr{\alpha_0} x \Xr{\alpha_1^{-1}} s\ast d_1$ by $\alpha=\alpha_1^{-1}\circ \alpha_0$.
\end{definition}

For comparison with Quillen's definition of cylinder object in \cite{Quillen}, assume that $\C$ is a model category that we regard as a discrete bicategory, and $\cc{W}$ is the family of weak equivalences. Then a cylinder object as in \cite[I.1, Def. 4]{Quillen} is a cylinder as in Definition \ref{defcylestiloQuillen} that satisfies some extra conditions, in particular $Z=X$, $x=id_X$.
    

Consider now any cylinder (either as in the present paper or a cylinder object as in \cite{Quillen}) $C$ for an object $X$, and any arrow $X' \mr{\ell} X$. Then we can define a cylinder \mbox{$C' = (W,Z,d_0*\ell,d_1*\ell,x*\ell,s,\alpha_0*\ell,\alpha_1*\ell)$} for $X'$, which we think of as the composition $C * \ell$.
Now, if $C$ is a classical cylinder object as in \cite{Quillen}, this is not anymore the case for $C'$ (note that this provides in particular new examples of cylinders as defined here). In this way this new notion of cylinder arises as a natural way to define the composition of arrows with homotopies (see \ref{homotopy*arrow} below for details). This is not only convenient for the computation of such compositions, but also turns out to be unavoidable in order to construct the homotopy bicategory with respect to a single class of weak equivalences instead of a full model structure.

\begin{definition} \label{def:Cilindroinversaeidentidad}
Given a cylinder $C$ as above, 
we define the \emph{inverse cylinder}  \mbox{$C^{-1} = (W,Z,d_1,d_0,x,s,\alpha_1,\alpha_0)$.}

\noindent
Also, for $X \in \C$ we define an \emph{identity cylinder} \mbox{$C_X = (X,X,id_X,id_X,id_X,id_X,id_X,id_X)$} (recall our abuse of notation \ref{sin:abuse}).
\end{definition}

\begin{definition}  \label{defhpy}
Let $f,g: X \rightarrow Y\in \C$. A \emph{left homotopy} (with respect to $\cc{W}$) $H$ from $f$ to 
$g$, which we will denote by $f \mrhpy{H} g$, is given by the data  
$H = (C,h,\eta,\eps)$, where $C$ is a cylinder for $X$ as in Definition \ref{defcylestiloQuillen}, $h$ is an arrow $W \mr{h} Y$ and $\eta,\eps$ are 2-cells $f \Mr{\eta} h * d_0$, $h * d_1 \Mr{\eps} g$.
We organize the data of a homotopy as follows
\begin{equation} \label{eq:H}
\begin{tabular}{cc}
$f \mrhpy{H} g$: \hspace{3ex}  
$\vcenter{\xymatrix{X \ar@<-.5ex>[dr]_{x} \ar@<1ex>[r]^{d_0} 
             \ar@<-1ex>[r]_{d_1} & W \ar[r]^h \ar[d]|{\circ}^s & Y \\ & Z}}$ &
\begin{tabular}{c}
$f \Mr{\eta} h \ast d_0$ \\
$s \ast d_0 \Mr{\alpha_0} x \Ml{\alpha_1} s \ast d_1$ \\
$h \ast d_1 \Mr{\eps} g$ 
\end{tabular}
\end{tabular}
\end{equation}
We say that $H$ has invertible cells if $\eta$ and $\eps$ are invertible (recall that $\alpha_0$ and $\alpha_1$ are always required to be invertible). 
\end{definition}

Throughout this article we will work only with left homotopies, and thus omit to write the word ``left". 
As in dimension 1, left homotopies suffice to construct the localization and right homotopies (which  correspond to considering the same class $\cc{W}$ in $\C^{op}$) are only needed for the construction of the homotopy bicategory of a model bicategory that we develop elsewhere.

\begin{definition} \label{def:Halamenos1}
If $H$ as in Definition~\ref{defhpy} has invertible cells, we define a homotopy $H^{-1} = (C^{-1},h,\eps^{-1},\eta^{-1})$ from $g$ to $f$.
\end{definition}

\begin{definition} \label{def:construccionK}
Any cylinder $C$ as in Definition~\ref{defcylestiloQuillen} determines a homotopy 
\mbox{$d_0 \mrhpy{H^C} d_1$,} (recall our abuse of notation \ref{sin:abuse}):
\begin{center}
\begin{tabular}{cc}
$d_0 \mrhpy{H^{C}} d_1$: \hspace{3ex}  
$\vcenter{\xymatrix{X \ar[rd]_x \ar@<1ex>[r]^{d_0} 
            \ar@<-1ex>[r]_{d_1} & W \ar[r]^{id_W} \ar[d]|{\circ}^s & W \\ & Z}}$ &
\begin{tabular}{c}
$d_0 \Xr{d_0} d_0$ \\
$s \ast d_0 \Mr{\alpha_0} x \Ml{\alpha_1} s \ast d_1$ \\
$d_1 \Xr{d_1} d_1$ 
\end{tabular}
\end{tabular}
\end{center}             
\end{definition}   

We now make various constructions for homotopies that we will use later. In these definitions we omit parentheses according to the abuse of notation~\ref{sin:abuse}. Let $H$ be as in \eqref{eq:H}:

\begin{sinnadastandard} \label{2-cellohomotopy}
If $g \Mr{\mu} g'\in\C$, we define a homotopy 
$\mu \circ H$ from $f$ to  $g'$ as follows 
\begin{center}
\begin{tabular}{cc}
$f \mrhpy{\mu \circ H} g'$: \hspace{2ex}
$\vcenter{\xymatrix{X \ar@<-.5ex>[dr]_{x} \ar@<1ex>[r]^{d_0} 
             \ar@<-1ex>[r]_{d_1} & W \ar[r]^{h} \ar[d]|{\circ}^s & Y' \\ & Z}}$ 
\begin{tabular}{rcl}
& $f \Mr{\eta} h \ast d_0$ \\
& $s \ast d_0 \Mr{\alpha_0} x \Ml{\alpha_1} s \ast d_1$  \\
& $h \ast d_1 \Mr{\varepsilon} g \Mr{\mu} g'$
\end{tabular}
\end{tabular}
\end{center}
\end{sinnadastandard}

\begin{sinnadastandard} \label{homotopyo2-cell}
If $f' \Mr{\nu} f\in\C$, we define a homotopy 
$H \circ\nu$ from $f'$ to  $g$ as follows 
\begin{center}
\begin{tabular}{cc}
$f' \mrhpy{H \circ\nu} g$: \hspace{2ex}
$\vcenter{\xymatrix{X \ar@<-.5ex>[dr]_{x} \ar@<1ex>[r]^{d_0} 
             \ar@<-1ex>[r]_{d_1} & W \ar[r]^{h} \ar[d]|{\circ}^s & Y' \\ & Z}}$ 
\begin{tabular}{rcl}
& $f'  \Mr{\nu} f \Mr{\eta} s \ast d_0$ \\
& $s \ast d_0 \Mr{\alpha_0} x \Ml{\alpha_1} s \ast d_1$  \\
& $h \ast d_1 \Mr{\varepsilon} g$
\end{tabular}
\end{tabular}
\end{center}
\end{sinnadastandard}

\begin{sinnadastandard} \label{arrow*homotopy}
If $Y \mr{r} Y'\in\C$, we define a homotopy $r \ast H$ from $r \ast f$ to $r \ast g$ as follows 
\begin{center}
\begin{tabular}{cc}
$r \ast f \mrhpy{r \ast H} r \ast g$: \hspace{2ex}
$\vcenter{\xymatrix{X \ar@<-.5ex>[dr]_{x} \ar@<1ex>[r]^{d_0} 
             \ar@<-1ex>[r]_{d_1} & W \ar[r]^{r \ast h} \ar[d]|{\circ}^s & Y' \\ & Z}}$ 
\begin{tabular}{c}
$r \ast f \Xr{r \ast \eta} r \ast h \ast d_0$ \\
$s \ast d_0 \Mr{\alpha_0} x \Ml{\alpha_1} s \ast d_1$  \\
$r \ast h \ast d_1 \Xr{r \ast \eps} r \ast g $
\end{tabular}
\end{tabular}
\end{center}
\end{sinnadastandard}

\begin{sinnadastandard} \label{homotopy*arrow}
If $X' \mr{\ell} X\in\C$, we define a homotopy $H \ast \ell$ from $f \ast \ell$ to $g \ast \ell$ as follows 
\begin{center}
\begin{tabular}{cc}
$f \ast \ell \mrhpy{H \ast \ell} g \ast \ell$: \hspace{2ex}
$\vcenter{\xymatrix{X' \ar@<-.5ex>[dr]_{x \ast \ell} \ar@<1ex>[r]^{d_0 \ast \ell} 
             \ar@<-1ex>[r]_{d_1 \ast \ell} & W \ar[r]^h \ar[d]|{\circ}^s & Y \\ & Z}}$ &
\begin{tabular}{c}
$f \ast \ell \Xr{\eta \ast \ell} h \ast d_0 \ast \ell$ \\
$s \ast d_0 \ast \ell \Xr{\alpha_0 \ast \ell} x \ast \ell \Xl{\alpha_1 \ast \ell} s \ast d_1 \ast \ell$ \\
$h \ast d_1 \ast \ell \Xr{\eps \ast \ell} g \ast \ell$
\end{tabular}
\end{tabular}
\end{center}
\end{sinnadastandard}

\begin{remark} \label{rem:Hcomo3sinclases}
For any homotopy $H$ as in \eqref{eq:H}, we have $H = \eps \circ (h * H^C) \circ \eta$.
\end{remark}

\begin{definition} \label{def:muinduceH}
A $2$-cell $\xymatrix{X \ar@<1.6ex>[r]^{f} 
             \ar@{}@<-1.3ex>[r]^{\!\! {\mu} \, \!\Downarrow}
             \ar@<-1.1ex>[r]_{g} & Y}\in\C$ yields two homotopies $H_0^\mu, H_1^\mu: f \mrhpy{} g$,
$H_0^\mu = (C_X,g,\mu,g)$ and
$H_1^\mu = (C_X,f,f,\mu)$:

\begin{center}             
\begin{tabular}{lc}
$H^{\mu}_0: \vcenter{\xymatrix{X \ar@<-.5ex>[dr]_{id_X} \ar@<1ex>[r]^{id_X} 
             \ar@<-1ex>[r]_{id_X} & X \ar[r]^g \ar[d]|{\circ}^{id_X} & Y \\ & X}} \quad\quad$ & 
             $H^{\mu}_1: \vcenter{\xymatrix{X \ar@<-.5ex>[dr]_{id_X} \ar@<1ex>[r]^{id_X} 
             \ar@<-1ex>[r]_{id_X} & X \ar[r]^f \ar[d]|{\circ}^{id_X} & Y \\ & X}} \quad\quad$ \\
$\eta = \mu$, $\alpha_0 = \alpha_1 = id_X$, $\eps = g \quad\quad$   
&
$\quad\quad \eta = f$, $\alpha_0 = \alpha_1 = id_X$, $\eps = \mu$
\end{tabular}
\end{center}
\end{definition}

The homotopies can be thought of as something that would be an actual $2$-cell if the arrows of $\cc{W}$ were equivalences (more generally if they were quasiequivalences, recall $\ref{quasiequivalencias})$. When this is the case, cylinders and homotopies yield actual $2$-cells of $\C$ as follows:

\begin{definition} \label{beta} Consider a cylinder $C$ as in Definition~\ref{defcylestiloQuillen}, with $s$ a quasiequivalence.

\smallskip

\noindent
1. We denote by $d_0 \Mr{\widehat{C}} d_1$ the unique invertible $2$-cell such that $s \ast \widehat{C} = \alpha$. 
\smallskip

\noindent
2. For a homotopy $H$ with cylinder $C$, we note by $\widehat{H}$ the  2-cell \mbox{$f \Mr{\eta} h \ast d_0 \Xr{h \ast \widehat{C}} h \ast d_1 \Mr{\eps} g$.}

Note that we have $\widehat{H^C} = \widehat{C}$. Item 2 in this definition can be considered as the extension of this formula to an arbitrary $H$ using Remark~\ref{rem:Hcomo3sinclases}.
\end{definition}

Consider now another family $\Gamma$ of arrows of a bicategory $\cc{D}$. For pseudofunctors \mbox{$\C \mr{F} \cc{D}$,} we write \mbox{$(\C,\cc{W}) \mr{F} (\cc{D},\Gamma)$} to denote that $F$ maps the arrows of $\cc{W}$ to $\Gamma$. We can apply the pseudofunctor $F$ to cylinders and homotopies of $\cc{C}$ as follows:

\begin{definition}
Let $(\C,\cc{W}) \mr{F} (\cc{D},\Gamma)$.

\smallskip

\noindent 1. For a cylinder $C$ as in Definition \ref{defcylestiloQuillen}, we define the cylinder $FC$ by $$FC = (FW,FZ,Fd,Fc,Fx,Fs,F\alpha_0 \circ \phi,F\alpha_1 \circ \phi).$$

\smallskip

\noindent 2. For a homotopy $H$ as in Definition \ref{defhpy}, we define the homotopy $\xymatrix{ Ff \ar@2{~>}@<0.25ex>[r]^{FH} & Fg}$ by $$FH = (FC,Fh,\phi \circ F\eta,F\eps \circ \phi).$$

The constructions of $FC$ and $FH$ are more clearly understood using the diagram

\begin{center}
\begin{tabular}{c} 
$\vcenter{\xymatrix{FX \ar@<-.5ex>[dr]_{Fx}  \ar@<1ex>[r]^{Fd_0} 
             \ar@<-1ex>[r]_{Fd_1} & FC \ar[r]^{Fh} \ar[d]|{\circ}^{Fs} & FY \\ & FZ}}$
\begin{tabular}{c}
$Ff \Xr{F\eta} F(h \ast d_0) \Mr{\phi} Fh \ast Fd_0$ \\
$Fs \ast Fd_0 \Mr{\phi} F(s \ast d_0) \Xr{F\alpha_0} Fx \Xl{F\alpha_1} F(s \ast d_1) \Ml{\phi} Fs \ast Fd_1$ \\
$Fh \ast Fd_1 \Mr{\phi} F(h \ast d_1) \Xr{F\eps} Fg$ 
\end{tabular}
\end{tabular}
\end{center}

\end{definition}

\begin{definition} \label{def:primerdefadhoc}
Recall that $\Theta_q$ denotes the class of quasiequivalences. 
We identify two homotopies $H$, $K$ if for every pseudofunctor $(\C,\cc{W}) \mr{F} (\cc{D},\Theta_q)$, $FH$ and $FK$ yield the same 2-cell (as in Definition  \ref{beta}) of $\cc{D}$, that is $[H] = [K] \iff \widehat{FH} = \widehat{FK}$ for every pseudofunctor $(\C,\cc{W}) \mr{F} (\cc{D},\Theta_q)$. 
\end{definition}

We will see below that it suffices to require the condition in Definition~\ref{def:primerdefadhoc} only for 2-functors $F$. The 2-cell $\widehat{FH}$ is the composition
\begin{equation}\label{eq:FHsombrero}
\widehat{FH}:
Ff \Mr{F\eta} F(h \ast d_0) \Mr{\phi} Fh \ast Fd_0 \Xr{Fh \ast \widehat{FC}} Fh \ast Fd_1 \Mr{\phi} F(h \ast d_1) \Mr{F\eps} Fg,
\end{equation}
where $Fd_0 \Xr{\widehat{FC}} Fd_1$ is the unique $2$-cell such that $Fs \ast \widehat{FC} = \phi \circ F\alpha \circ \phi$. 
With the notation of Definition~\ref{def:compositionsubF}, this can be stated as:

\begin{remark} \label{def:FyHinducen2cell}
 For a homotopy $H$ as in Definition~\ref {defhpy}, and a pseudofunctor 
 \mbox{$(\C,\cc{W}) \mr{F} (\cc{D},\Theta_q)$,} $\widehat{FH}$ is the composition 
$Ff \Mr{F\eta} F(h \ast d_0) \Xr{Fh \ast_F \widehat{FC}} F(h \ast d_1) \Mr{F\eps} Fg$, where $\widehat{FC}$ is the unique $2$-cell such that $Fs \ast_F \widehat{FC} = F\alpha$. Note that when $F$ is a 2-functor, $\, \ast_F = \ast \,$.
\end{remark}

\begin{remark} \label{rem:soloimportaalphatilde}
It is the composition $\alpha = \alpha_1^{-1} \circ \alpha_0$ which is used in order to determine the class of a homotopy. 
This suggests that we can also define a notion of 
cylinder in which $\alpha_0$ and $\alpha_1$ are replaced by a single (invertible) 2-cell $s * d_0 \Mr{\alpha} s * d_1$. Note that all the constructions of this paper work also for the corresponding notion of homotopy. 
In fact, under the hypothesis of Theorem \ref{enfin}, the two resulting homotopy bicategories are the same. This can be either seen by a direct computation, or showing (with the same proof) that both constructions satisfy that Theorem. 
Our reason for considering the notion in Definition \ref{defcylestiloQuillen}, even though it involves more data than the one with a single 2-cell, comes from our interpretation of cylinders as a bicategorical version of a {cylinder} in dimension 1, in which identities are replaced by invertible 2-cells. In a cylinder, $d_0$ and $d_1$ are interchangeable, and we consider this symmetry is better reflected in the definition with two 2-cells than with {a $2$-dimensional version of a commuting square, that is, a single invertible 2-cell} for which one is forced to choose a direction.

%
%
%
\end{remark}

Consider a pseudofunctor $(\C,\cc{W}) \mr{F} (\cc{D},\Theta_q)$, and consider the factorization of $F$ through $\C_F$ given in \ref{sin:primerfactorizacion}. Note that, as  was explained in \ref{quasiequivalencias}, we have 
\mbox{$(\C,\cc{W}) \mr{F_2} (\cc{C}_F,\Theta_q)$.}
The following remark follows immediately by considering Remark~\ref{def:FyHinducen2cell} for $F$ and for $F_2$ (recall that the horizontal composition of 2-cells in 
$\cc{C}_F$ is given by $\, \ast_F \,$).

\begin{remarkitalica} \label{prop:bastaconlos2functors}
Consider the situation in Definition \ref{def:compositionsubF}. We have:

\begin{enumerate}
	\item\label{it:F1F2} $F_1(\widehat{F_2H}) = {\widehat{FH}}$.
	
	\item\label{it:HK} Let $H$, $K$ be any two homotopies, then 
	$$
	\widehat{FH} = \widehat{FK} \;\text{for every pseudofunctor} \; F 
	\iff  \widehat{FH} = \widehat{FK} \; \text{for every 2-functor} \; F. 
	$$   
	(where $F$ labels an arrow  $(\C,\cc{W}) \mr{F} (\cc{D},\Theta_q)$).
\end{enumerate}
\end{remarkitalica}
\begin{proof}
Note that $F_1(\beta \ast_{F_2} \alpha) = \beta \ast_{F} \alpha$, \ref{it:F1F2} follows from this equality, \ref{it:HK} follows from \ref{it:F1F2}.
\end{proof}

The previous remark allows us to consider 2-functors $(\C,\cc{W}) \mr{F} (\cc{D},\Theta_q)$ instead of arbitrary pseudofunctors in Definition~\ref{def:primerdefadhoc}.

\begin{proposition} \label{prop:muinduceH}
Consider the homotopies of Definition~\ref{def:muinduceH}. Then, for any 2-functor  
$(\C,\cc{W}) \mr{F} (\cc{D}, \, \Theta_q)$, $\widehat{FH_0^{\mu}} = \widehat{FH_1^{\mu}} = F\mu$. 
\end{proposition}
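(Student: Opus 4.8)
The plan is to unwind the definition of $\widehat{FH}$ from Remark~\ref{def:FyHinducen2cell} in the two very simple cases $H_0^\mu$ and $H_1^\mu$, where the underlying $\sigma$-cylinder is the identity cylinder $C_X$. Since $F$ is now a 2-functor, we have $\ast_F = \ast$ throughout, and moreover $F$ maps the identity $\sigma$-cylinder $C_X = (X,X,id_X,\dotsc,id_X)$ to the identity $\sigma$-cylinder $FC_X = C_{FX}$ (all structural data being identities, and $\phi$ being an identity for a 2-functor). The first step is therefore to compute $\widehat{FC_X}$: by definition it is the unique 2-cell $Fid_X \Mr{} Fid_X$ with $Fs \ast \widehat{FC_X} = F\widetilde{\alpha}$, and since for $C_X$ we have $s = id_X$ and $\widetilde{\alpha} = id$, uniqueness forces $\widehat{FC_X} = id_{Fid_X} = id$.

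Next I would substitute this into the formula \eqref{eq:FHsombrero}. For $H_0^\mu = (C_X,g,\mu,g)$ we have $h = g$, $\eta = \mu$ and $\eps = g$, so $\widehat{FH_0^\mu}$ is the composite
\[
Ff \Mr{F\mu} F(g \ast id_X) \Xr{Fg \ast \widehat{FC_X}} F(g \ast id_X) \Mr{Fg} Fg.
\]
The middle whiskering $Fg \ast \widehat{FC_X} = Fg \ast id = id$ by axiom {\bf W2}, and the outer cell $F\eps = Fg$ is an identity 2-cell; after absorbing the (suppressed) unitors via the abuse of notation~\ref{sin:abuse}, the whole composite collapses to $F\mu$. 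The computation for $H_1^\mu = (C_X,f,f,\mu)$ is symmetric: here $h = f$, $\eta = f$ and $\eps = \mu$, the whiskered cell is again the identity, and the composite reduces to $F\mu$ as well.

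The only genuine point requiring care, rather than a genuine obstacle, is bookkeeping of the unitors $\lambda,\rho$ that are hidden by the abuse of notation~\ref{sin:abuse}: one should check that the canonical coherence isomorphisms comparing $g \ast id_X$ with $g$ cancel correctly so that $\widehat{FH_0^\mu}$ really equals $F\mu$ on the nose and not merely up to a unitor. Since $F$ is a 2-functor it commutes strictly with $\ast$ and sends the unitors of $\C$ to those of $\cc{D}$, so by coherence these comparison cells are uniquely determined and cancel, exactly as they do in $\C$. This disposes of the chain $\widehat{FH_0^\mu} = F\mu = \widehat{FH_1^\mu}$, which is the assertion. $\Box$
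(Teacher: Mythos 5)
Your proposal is correct and follows essentially the same route as the paper: both compute $\widehat{FC_X}=id_{id_X}$ from the uniqueness clause (using $s=id_X$, $\widetilde{\alpha}=id$), substitute into the formula for $\widehat{FH}$, and observe that the whiskering with an identity and the identity cells $F\eps$ (resp.\ $F\eta$) collapse the composite to $F\mu$. Your extra paragraph on unitor bookkeeping is just a more explicit justification of the abuse of notation~\ref{sin:abuse} that the paper's proof invokes silently.
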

\begin{proof}
In the notation of Definition~\ref {defhpy} the 
homotopy $H_0^{\mu}$ has 
$s = id_X$, $\eta = \mu$, $\alpha = id_{id_X}$, $\varepsilon = id_g$, thus $\widehat{FC} = id_{id_X}$, and $\widehat{FH_0^{\mu}}$ is the composition $Fid_g \circ Fid_{id_X} \circ F\mu = F\mu$.
The case of $H_1^{\mu}$ is similar.
\end{proof}

\begin{definition} \label{Imu}
Given any 2-cell $\mu\in\C$, the notation $I^\mu$ stands for any homotopy such that for any 2-functor $(\C,\cc{W}) \mr{F} (\cc{D}, \, \Theta_q)$, $\widehat{FI^\mu} = F\mu$; note that in view of the previous proposition such a $I^\mu$ always exists.
\end{definition}  

\begin{proposition} \label{prop:cuadradowhiskering}
Let $(\C,\cc{W}) \mr{F} (\cc{D},\Theta_q)$ be a pseudofunctor and let $H$ be as in \eqref {eq:H}. Then:
\begin{enumerate}
\item For each $g \Mr{\mu} g'\in\C$ as in \ref{2-cellohomotopy}, we have $\widehat{F(\mu \circ H)}= F\mu \circ \widehat{FH}$.
\item For each $f' \Mr{\nu} f\in\C$ as in \ref{homotopyo2-cell}, we have $\widehat{F(H \circ \nu)} = \widehat{FH} \circ F\nu$.
\item For each $Y \mr{r} Y'\in\C$ as in \ref{arrow*homotopy} we have $\widehat{F(r \ast H)} = Fr \ast_F \widehat{FH}$.
\item For each $X' \mr{\ell} X\in\C$ as in \ref{homotopy*arrow} we have $\widehat{F(H \ast \ell)} = \widehat{FH} \ast_F F\ell$.
\end{enumerate}
\end{proposition}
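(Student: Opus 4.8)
The plan is to reduce everything to the definition of $\widehat{FH}$ given in \eqref{eq:FHsombrero} (equivalently Remark~\ref{def:FyHinducen2cell}) and verify each of the four items by inspecting how the constructions \ref{2-cellohomotopy}--\ref{homotopy*arrow} alter the three data $F\eta$, $\widehat{FC}$, $F\eps$ of which $\widehat{FH}$ is composed. By Remark~\ref{prop:bastaconlos2functors} it suffices to treat the case in which $F$ is a $2$-functor, since $F_1$ is faithful on the relevant cells and $F_1(\widehat{F_2H}) = \widehat{FH}$; in the $2$-functor case $\ast_F = \ast$, which removes all the $\phi$'s from \eqref{eq:FHsombrero} and leaves the clean expression $\widehat{FH} = F\eps \circ (Fh \ast \widehat{FC}) \circ F\eta$. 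I would state this reduction explicitly at the start so that the remaining verifications are pure elevator-calculus computations with no structural $2$-cells in the way.

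For items (1) and (2) the key observation is that the modifications $\mu \circ H$ and $H \circ \nu$ do \emph{not} change the underlying $\sigma$-cylinder $C$; they only postcompose $\eps$ with $\mu$, respectively precompose $\eta$ with $\nu$. Hence $\widehat{FC}$ is unchanged, and I would simply read off from \eqref{eq:FHsombrero} that $\widehat{F(\mu \circ H)} = F\mu \circ \bigl(F\eps \circ (Fh \ast \widehat{FC}) \circ F\eta\bigr) = F\mu \circ \widehat{FH}$, using functoriality of $F$ on vertical composition ($F(\mu \circ \eps) = F\mu \circ F\eps$), and symmetrically for (2). These two cases are essentially immediate.

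Items (3) and (4) require a little more care because they alter the cylinder. In (3), $r \ast H$ replaces $h$ by $r \ast h$ but leaves $C$ (and therefore $\widehat{FC}$) untouched, so the point is to check that whiskering by $Fr$ commutes past the middle factor: one must see that $F(r\ast h) \ast \widehat{F(rC)} $ reduces, via the interchange law and axiom {\bf W3}, to $Fr \ast (Fh \ast \widehat{FC})$, after which $\widehat{F(r\ast H)} = Fr \ast_F \widehat{FH}$ follows by comparing with Definition~\ref{def:compositionsubF}. In (4), $H \ast \ell$ replaces the cylinder $C$ by the whiskered cylinder with $d_0 \ast \ell$, $d_1 \ast \ell$, $x \ast \ell$, so here I must identify $\widehat{F(C \ast \ell)}$ explicitly: I expect $\widehat{F(C\ast \ell)} = \widehat{FC} \ast F\ell$, which I would prove from the defining equation $Fs \ast \widehat{FC} = F\widetilde\alpha$ by whiskering both sides by $F\ell$ and invoking uniqueness of the cell $\widehat{F(C\ast\ell)}$ (available since $Fs$ is still a quasiequivalence, hence reflects $2$-cells). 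Once this identification is in hand, the remaining assembly is again elevator calculus.

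The main obstacle is item (4): the uniqueness characterization of $\widehat{FC}$ lives on the left of $Fs$, and transporting it correctly along the whiskering by $\ell$ — checking that $\widehat{FC}\ast F\ell$ genuinely satisfies $Fs \ast (\widehat{FC} \ast F\ell) = F(\widetilde\alpha \ast \ell)$ and then appealing to the reflecting property of the quasiequivalence $Fs$ — is the one step that is not purely formal bookkeeping. Everything else reduces, after the $2$-functor simplification, to functoriality and the interchange law.
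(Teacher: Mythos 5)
Your proposal is correct and takes essentially the same route as the paper's proof: items 1 and 2 are immediate, items 3 and 4 are established first for 2-functors (where $\ast_F = \ast$), with item 4 hinging exactly on the identification $\widehat{F(C\ast\ell)} = \widehat{FC}\ast F\ell$ via the defining equation $Fs \ast \widehat{FC} = F\widetilde{\alpha}$ and the quasiequivalence $Fs$, and the general pseudofunctor case is recovered through the factorization $F = F_1F_2$ using Remark~\ref{prop:bastaconlos2functors} together with Remark~\ref{rem:circFacellsenImF}. The only cosmetic difference is that you perform the reduction to 2-functors at the outset (justified by $F_1$ being the identity on 2-cells), whereas the paper proves the 2-functor case first and then deduces the pseudofunctor case at the end via the computation $\widehat{F(r\ast H)} = F_1\widehat{F_2(r\ast H)} = F_1(r\ast\widehat{F_2H}) = Fr \ast_F \widehat{FH}$.
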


\begin{proof}  Items 1 and 2 are immediate. 
We show first items 3 and 4 assuming that $F$ is a 2-functor (recall that in this case $\ast_F$ is just $\ast$). Let $Fd \Mr{\widehat{FC}} Fc$ be the unique 2-cell such that $Fs \ast \widehat{FC} = F\alpha$.

\vspace{1ex} 

\noindent Proof of 3: $\widehat{F(r \ast H)}$ is the 2-cell 
$$Fr \ast Ff \Xr{Fr \ast \eta} Fr \ast Fh \ast Fd_0 \Xr{Fr \ast Fh \ast \widehat{FC}} 
Fr \ast Fh \ast Fd_1 \Xr{Fr \ast \eps} Fr \ast Fg,$$

\noindent $\quad \quad \quad \quad \quad$ which is equal to $Fr \ast \widehat{FH}$. 

\smallskip

\noindent Proof of 4: We have $Fs \ast \widehat{FC} \ast F\ell = \alpha \ast F\ell$, and thus 
$\widehat{F(H \ast \ell)}$ is the 2-cell
$$Ff \ast F\ell \Xr{\eta \ast F\ell} Fh \ast Fd_0 \ast F\ell \Xr{Fh \ast \widehat{FC} \ast F\ell} Fh \ast Fd_1 \ast F\ell \Xr{\eps \ast F\ell} Fg \ast F\ell,$$

\noindent $\quad \quad \quad \quad \quad$ which is equal to $\widehat{FH} \ast F\ell$.

\smallskip

\noindent If $F$ is a pseudofunctor, we have
$$\widehat{F(r \ast H)} = F_1 \widehat{F_2(r \ast H)} = F_1 (F_2r \ast \widehat{F_2H})= F_1 (r \ast \widehat{F_2H})=Fr \ast_F \widehat{FH},$$

\noindent where the first equality holds by Remark~\ref{prop:bastaconlos2functors} and the last one is due to Remark~\ref{rem:circFacellsenImF} plus the fact that the structural cells of $F_1$ are those of $F$. The case of item 4 is dual.
\end{proof}

\bigskip

\noindent {\bf The bicategory $\Ho$ and the 2-functor 
$\cc{C} \mr{i} \Ho.$}
We will construct here the bicategory $\Ho$, whose 2-cells are given by the homotopies.
In this more general case, it seems that two arbitrary homotopies can't be vertically composed {\em \`a la}  Quillen (see \cite[Lemma 3]{Quillen}).
This is why finite sequences of composable homotopies have to be considered instead
\footnote{We deal in Appendix \ref{sec:vertcomp} with a set of hypothesis that allow to compose homotopies so that single homotopies can be used instead of finite sequences.}.
%
 We begin by extending Definition~\ref{def:primerdefadhoc} to these sequences as follows:

\begin{definition}  \label{eq:equiventrehomot}
Two finite sequences of homotopies $\xymatrix{f \ar@2{~>}@<0.25ex>[r]^{H^1} & f_1 \ar@2{~>}@<0.25ex>[r]^>>>>>>{H^2} & f_2 \dotsb f_{n-1} \ar@2{~>}@<0.25ex>[r]^>>>>>>{H^n} & g}$,
 $\xymatrix{f \ar@2{~>}@<0.25ex>[r]^{K^1} & f'_1 \ar@2{~>}@<0.25ex>[r]^>>>>>>{K^2} & f'_2 \dotsb  f'_{m-1} \ar@2{~>}@<0.25ex>[r]^>>>>>>{K^m} & g}$ are considered equivalent by the following definition:  

\vspace{1ex}

\noindent
$  [H^n,\dotsc,H^{2},H^1] = [K^m,\dotsc,K^{2},K^1]  \iff $ 

for every 2-functor $(\C,\cc{W}) \mr{F} (\cc{D},\Theta_q)$, $\widehat{FH^n} \circ \dotsb  \widehat{FH^2} \circ \widehat{FH^1} = \widehat{FK^m} \circ \dotsb  \widehat{FK^2} \circ \widehat{FK^1}.$
 \end{definition}
 
\begin{remarkitalica} \label{rem:bastaconlos2functors}
Note that, by Remark~\ref{prop:bastaconlos2functors}, it is equivalent  
to state the condition above for every pseudofunctor.
\end{remarkitalica}

\noindent We construct now a bicategory which we refer to as the homotopy bicategory of $\C$ with respect to $\cc{W}$ and denote by $\Ho$: 

\begin{sinnadaitalica} \label{def:2cellsdeHoC} \label{bicategoryHo}
The objects and the arrows of $\Ho$ are again the objects and arrows of $\cc{C}$. The 2-cells of $\Ho$ are, loosely speaking, the homotopies of $\cc{C}$. More precisely, a $2$-cell $f \Mr{} g\in\Ho$ is given by the class $[H^n,\dotsc, H^2,H^1]$ of a finite sequence of homotopies.
\end{sinnadaitalica}

\begin{remarkitalica} 
Note that by Definition~\ref{Imu} all possible homotopies $I^\mu$  determine the same class in $\Ho$. In particular by  Proposition~\ref{prop:muinduceH} this is the case for the two \mbox{homotopies} $H^{\mu}_0$ and $H^{\mu}_1$ in Definition~\ref{def:muinduceH}.
\end{remarkitalica}

\smallskip

\noindent
{{\bf Vertical composition.}} Vertical composition is defined by juxtaposition:
\begin{sinnadaitalica} \label{verticaldefinition}
 For $[H^n,\dotsc,H^2,H^1]$ as above and $\xymatrix{g \ar@2{~>}@<0.25ex>[r]^{K^1} & g_1 \ar@2{~>}@<0.25ex>[r]^>>>>>>{K^2} & g_2 \dotsb  g_{m-1} \ar@2{~>}@<0.25ex>[r]^>>>>>>{K^m} & h}$, we define $$[K^m,\dotsc,K^2,K^1] \circ [H^n,\dotsc,H^2,H^1] = [K^m,\dotsc,K^2, K^1,H^n,\dotsc,H^2,H^1].$$
This is clearly well defined and associative. 
{Note that $[H^n,\dotsc,H^1] = [H^n] \circ \dotsb \circ [H^1]$.}

For 2-cells in $\cc{C}$, by Proposition \ref{prop:muinduceH} we have 
$[H_0^{\mu \circ \mu'}] = [H_0^{\mu'},\, H_0^{\mu}]$ $(\,= [H_0^{\mu'}] \circ [H_0^{\mu}]\,)$, and similarly for $H_1$.
\end{sinnadaitalica}

From Proposition \ref{prop:cuadradowhiskering} it follows:

\begin{proposition} \label{verticalok}
Let $H,\,\mu,\,\nu$ be as in \eqref{eq:H}, 
\ref{2-cellohomotopy} and \ref{homotopyo2-cell} respectively, and consider Definition \ref{Imu}. Then the following hold:

\begin{enumerate}
	\item $[\mu \circ H] = [I^\mu] \circ [H]$.
	
	\item $[H \circ \nu] = [H] \circ [I^\nu]$.\qed
\end{enumerate}

\end{proposition}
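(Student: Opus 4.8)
The plan is to reduce both equalities, via the definitions of vertical composition and of equivalence of sequences, to the whiskering formulas already established in Proposition~\ref{prop:cuadradowhiskering}.

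For item~1, I first recall that by the definition of vertical composition in~\ref{verticaldefinition}, the right-hand side $[I^\mu]\circ[H]$ is the class $[I^\mu,H]$ of the two-term sequence. Hence, by Definition~\ref{eq:equiventrehomot}, proving $[\mu\circ H]=[I^\mu]\circ[H]$ amounts to checking that for every 2-functor $(\C,\Sigma)\mr{F}(\cc{D},q\Theta)$ one has
\begin{equation*}
\widehat{F(\mu\circ H)}=\widehat{FI^\mu}\circ\widehat{FH}.
\end{equation*}
Then I would compute the two sides separately. The left-hand side is handled by Proposition~\ref{prop:cuadradowhiskering}(1), which gives $\widehat{F(\mu\circ H)}=F\mu\circ\widehat{FH}$. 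The right-hand side is handled by Definition~\ref{Imu}, which gives $\widehat{FI^\mu}=F\mu$ for every 2-functor $F$, so that $\widehat{FI^\mu}\circ\widehat{FH}=F\mu\circ\widehat{FH}$. The two expressions coincide, which establishes item~1.

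Item~2 is entirely dual: by~\ref{verticaldefinition} the right-hand side $[H]\circ[I^\nu]$ equals $[H,I^\nu]$, and one checks $\widehat{F(H\circ\nu)}=\widehat{FH}\circ\widehat{FI^\nu}$ using Proposition~\ref{prop:cuadradowhiskering}(2), which gives $\widehat{F(H\circ\nu)}=\widehat{FH}\circ F\nu$, together with $\widehat{FI^\nu}=F\nu$ from Definition~\ref{Imu}.

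I do not expect any genuine obstacle: once the definitions are unwound, the statement is an immediate consequence of Proposition~\ref{prop:cuadradowhiskering} and the defining property of $I^\mu$ in Definition~\ref{Imu}. The only points requiring care are bookkeeping the order of composition---so that the single-term $\sigma$-homotopy $\mu\circ H$ on the left is matched against the two-term class $[I^\mu,H]$ written in the correct order---and the fact, recorded just before Proposition~\ref{prop:muinduceH}, that it suffices to test equality of 2-cells on 2-functors rather than on all pseudofunctors.
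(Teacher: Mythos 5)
Your proof is correct and is essentially the paper's own argument: the paper derives Proposition~\ref{verticalok} directly from Proposition~\ref{prop:cuadradowhiskering} together with the defining property of $I^\mu$ in Definition~\ref{Imu}, exactly as you do after unwinding Definitions~\ref{eq:equiventrehomot} and~\ref{verticaldefinition}. Nothing is missing; your writeup simply makes explicit the bookkeeping the paper leaves implicit.
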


{{\bf Horizontal composition.}} We define now the horizontal composition in $\Ho$. We proceed as explained in \ref{sin:whiskerings}, that is, we will define it only between 2-cells and arrows, and show the axioms {\bf W}. 

\begin{sinnadaitalica} \label{horizontaldefinition}
For $\xymatrix{ X \ar@<1ex>[r]^{f} 
             \ar@<-1ex>[r]_{g} & Y \ar[r]^r & Y'}\in\C$ and $[H^n,\dotsc,H^2,H^1]: f \Rightarrow g$ as in Definition~\ref{def:2cellsdeHoC}, we define 
             $r \ast [H^n,\dotsc,H^2,H^1] = [r \ast H^n,\dotsc,r \ast H^2,r \ast H^1]$, and similarly for $X' \mr{\ell} X\in\C$ (see \ref{arrow*homotopy} and \ref{homotopy*arrow}). The fact that these formulas are well defined  follows from Proposition~\ref{prop:cuadradowhiskering}.
\end{sinnadaitalica}

Axiom {\bf W3} follows by definition.
To verify axiom {\bf W1}, it suffices to check the case in which the 2-cells are sequences of length 1, that is, given 
$\xymatrix{X \ar@<1.6ex>[r]^{f_1} 
             \ar@{}@<-1.3ex>[r]^{\!\! {[H]} \, \!\Downarrow}
             \ar@<-1.1ex>[r]_{f_2} & Y      
\ar@<1.6ex>[r]^{g_1} 
             \ar@{}@<-1.3ex>[r]^{\!\! {[K]} \, \!\Downarrow}
             \ar@<-1.1ex>[r]_{g_2} & Z}\in\Ho$, we have to check that $[g_2 \ast H,K \ast f_1] = [K \ast f_2,g_1 \ast H]$. Again this follows easily from Proposition~\ref{prop:cuadradowhiskering}, using axiom {\bf W1} in $\cc{D}$ for every 2-functor $(\C,\cc{W}) \mr{F} (\cc{D},\Theta_q)$.
             
For each $f$, we define the identity 2-cell of $\Ho$,
$id_f = [I^f]$, see Definition \ref{Imu} and recall the abuse $f = id_f$. By definition, it is immediate that $id_f$ is the identity for the vertical composition, and that axiom {\bf W2} is satisfied.

We define the identity arrows as in $\C$. 
It remains to define the associators and the unitors and check that they satisfy the axioms.  Before doing this it is convenient to construct the 2-functor $\cc{C} \mr{i} \Ho$.
 
\begin{sinnadaitalica} \label{functori}
On objects and arrows $\,i\,$ is just the identity. For a $2$-cell $\mu$ of $\C$, we define \mbox{$i\mu = [I^{\mu}]$,} that is the class of the sequence of length one given by any  $I^{\mu}$. From Definitions \ref{def:2cellsdeHoC} and \ref{Imu} it follows that for any homotopy $H$: 

 $\hspace{10ex}$  $i\mu = [H]$  $\iff$  for every 2-functor $(\cc{C},\, \cc{W}) \mr{F} (\cc{D},\,\Theta_q)$, $\widehat{FH} = F\mu$. 
\end{sinnadaitalica}

The unitors and the associator of $\Ho$ are obtained by applying $i$ to the ones of $\C$. Axioms {\bf N}$\lambda$, {\bf N}$\rho$ and {\bf N}$\theta$1-3 follow immediately from Proposition \ref{prop:cuadradowhiskering}, using the corresponding axioms in $\cc{D}$ for every 2-functor $(\C,\cc{W}) \mr{F} (\cc{D},\Theta_q)$.

We will now show that $\C \mr{i} \Ho$, mapping 
$\xymatrix{X \ar@<1.6ex>[r]^{f} 
             \ar@{}@<-1.3ex>[r]^{\!\! {\mu} \, \!\Downarrow}
             \ar@<-1.1ex>[r]_{g} & Y}$ to 
$\xymatrix{X \ar@<1.6ex>[r]^{f} 
             \ar@{}@<-1.3ex>[r]^{\!\! {i\mu} \, \!\Downarrow}
             \ar@<-1.1ex>[r]_{g} & Y}$, is a \mbox{2-functor}. From this fact, since the associators and the unitors of $\Ho$ are defined applying $i$ to the ones of $\C$, it will follow that they are invertible and that the pentagon and triangle identities hold, ending the proof that $\Ho$ is a bicategory.
             
With $i$ being trivial at the level of objects and arrows,  and mapping the identity 2-cells to the identities by definition, it suffices to check that $i$ preserves both compositions of 2-cells. The fact that $i$ preserves the vertical composition follows immediately by \ref{verticaldefinition}. To show that $i$ preserves the horizontal composition, we consider $\xymatrix{X \ar@<1.6ex>[r]^{f} 
             \ar@{}@<-1.3ex>[r]^{\!\! {\mu} \, \!\Downarrow}
             \ar@<-1.1ex>[r]_{g} & Y \ar[r]^r & Y'}\in\C$, and we have to show that $i(r \ast \mu) = r \ast i\mu$, i.e. that
            $[I^{r \ast \mu}] = [r \ast I^\mu]$.             
            For each 2-functor $(\C,\cc{W}) \mr{F} (\cc{D},\Theta_q)$, by Definition~\ref{Imu} and Proposition~\ref{prop:cuadradowhiskering} we have: 
$$\widehat{FI^{r \ast \mu}} = 
F(r \ast \mu) = Fr \ast F\mu =  Fr \ast \widehat{FI^\mu} = \widehat{F(r \ast I^\mu)},$$
showing the desired equation. The other case is similar. 
We have shown:

\begin{proposition} \label{funtoriandHo}
For any pair $(\C, \, \cc{W})$, $\Ho$ defined in \ref{bicategoryHo} 
is a bicategory, and \mbox{$\C \mr{i} \Ho$} defined in \ref{functori} is a 2-functor.  \qed
\end{proposition}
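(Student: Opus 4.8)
The plan is to obtain every piece of the bicategory structure of $\Ho$ and the \mbox{2-functoriality} of $i$ by \emph{transport of structure} along $i$, reducing each axiom either to the corresponding axiom in $\C$ or, through the defining Definition~\ref{eq:equiventrehomot}, to the corresponding axiom holding in every target $(\cc{D},q\Theta)$. Most of the data is already in place: the 2-cells and their vertical composition by juxtaposition (Definition~\ref{verticaldefinition}), the whiskerings (\ref{horizontaldefinition}), the identity 2-cells $id_f = [I^f]$, the candidate map $\C \mr{i} \Ho$ (\ref{functori}), and the \emph{definition} of the associators and unitors of $\Ho$ as $i\theta$, $i\rho$, $i\lambda$. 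What remains is to complete the horizontal composition by checking the axioms \textbf{W} (which by \ref{sin:whiskerings} suffices to build the full horizontal composition), to show that $i$ commutes with both compositions and with identities, and finally to transport the invertibility of the unitors and associators together with the naturality, pentagon and triangle identities from $\C$.

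First I would finish the horizontal composition. Well-definedness of the whiskerings and axioms \textbf{W3} and \textbf{W2} are immediate from \ref{horizontaldefinition} and from $id_f = [I^f]$. For \textbf{W1}, axiom \textbf{W3} reduces the verification to sequences of length one, so it suffices to prove the equality of classes $[K \ast f_1,\, g_2 \ast H] = [g_1 \ast H,\, K \ast f_2]$; by Definition~\ref{eq:equiventrehomot} this amounts to showing that for every \mbox{2-functor} $(\C,\Sigma) \mr{F} (\cc{D},q\Theta)$ the two composites of the associated $\widehat{F(-)}$ agree, which follows from Proposition~\ref{prop:cuadradowhiskering}(3,4) together with axiom \textbf{W1} holding in $\cc{D}$.

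Next I would check that $i$ respects the structure. It is the identity on objects and arrows and sends identity 2-cells to identities by construction. Preservation of vertical composition is immediate from the juxtaposition formula of \ref{verticaldefinition} (using Proposition~\ref{prop:muinduceH}, which gives $[I^{\mu' \circ \mu}] = [I^{\mu'}] \circ [I^\mu]$). The one genuine computation is preservation of whiskering, i.e. $[I^{r \ast \mu}] = [r \ast I^\mu]$; this holds because, for every \mbox{2-functor} $F$, Definition~\ref{Imu} and Proposition~\ref{prop:cuadradowhiskering}(3) give $\widehat{FI^{r \ast \mu}} = F(r \ast \mu) = Fr \ast F\mu = Fr \ast \widehat{FI^\mu} = \widehat{F(r \ast I^\mu)}$, and the left-whiskering case is symmetric.

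Finally, since the associators and unitors of $\Ho$ are the images under $i$ of the invertible 2-cells $\theta$, $\rho$, $\lambda$ of $\C$, and since $i$ is a functor on each hom-category (it preserves vertical composition and identity 2-cells), these images are again invertible, with inverses $i(\theta^{-1})$ and so on. Each of the remaining laws --- the naturalities \textbf{N}$\lambda$, \textbf{N}$\rho$, \textbf{N}$\theta$1--3 and the pentagon and triangle identities --- is an equation built solely from the two compositions of 2-cells and from $\theta,\rho,\lambda$; as it holds in $\C$ and $i$ commutes with both compositions and with $\theta,\rho,\lambda$, applying $i$ yields the corresponding equation in $\Ho$. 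The subtlety to watch --- and the reason for the careful ordering above --- is the apparent circularity in defining the associators and unitors through $i$ while calling $i$ a 2-functor: this is resolved by noting that ``$i$ preserves the two compositions'' refers only to the compositions of $\Ho$, which are defined independently of its associators and unitors, so it is legitimate to establish this preservation first and only then transport the coherence axioms. The computational heart of the whole argument is therefore the single identity $[I^{r \ast \mu}] = [r \ast I^\mu]$ (with its dual) and the length-one reduction of \textbf{W1}; everything else is transport of structure.
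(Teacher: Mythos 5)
Your construction and its ordering coincide with the paper's proof in almost every step: the reduction of \textbf{W1} to sequences of length one via Definition~\ref{eq:equiventrehomot}, the key identity $[I^{r\ast\mu}]=[r\ast I^{\mu}]$ proved through Proposition~\ref{prop:cuadradowhiskering}, the transport of invertibility and of the pentagon and triangle identities along $i$, and the resolution of the apparent circularity. However, there is a genuine gap in your final paragraph: the naturality axioms \textbf{N}$\lambda$, \textbf{N}$\rho$, \textbf{N}$\theta$1--3 are \emph{not} ``equations built solely from the two compositions of 2-cells and from $\theta,\rho,\lambda$''; each of them is universally quantified over arbitrary 2-cells of the bicategory. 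In $\Ho$, axiom \textbf{N}$\lambda$ demands $\lambda\circ\alpha\ast id_X=\alpha\circ\lambda$ for \emph{every} 2-cell $\alpha=[H^n,\dotsc,H^1]$, and such classes are in general not of the form $i\mu$: the whole point of $\Ho$ is that it contains 2-cells not coming from $\C$ (for instance the classes $[H^C]$ of Proposition~\ref{prop:homotinversible}, which provide the inverses that make arrows of $\Sigma$ into equivalences). Applying $i$ to the naturality equations of $\C$ therefore only yields the instances in which the quantified 2-cell lies in the image of $i$, which is strictly weaker than the axioms. Your transport argument \emph{is} correct for the pentagon and triangle identities, since every 2-cell occurring in those equations is a whiskering or vertical composite of associators and unitors, hence in the image of $i$; this is exactly the distinction the paper makes.

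The repair is local and uses the very mechanism you already employed for \textbf{W1}, which is also the paper's route: verify the naturalities through Definition~\ref{eq:equiventrehomot} and Proposition~\ref{prop:cuadradowhiskering}, using the corresponding axioms in the target $\cc{D}$. For \textbf{N}$\lambda$, say, with $\alpha=[H^n,\dotsc,H^1]\colon f\Rightarrow g$, the two sides are the classes $[I^{\lambda},H^n\ast id_X,\dotsc,H^1\ast id_X]$ and $[H^n,\dotsc,H^1,I^{\lambda}]$; for any 2-functor $(\C,\Sigma)\mr{F}(\cc{D},q\Theta)$, Proposition~\ref{prop:cuadradowhiskering} gives $\widehat{F(H^j\ast id_X)}=\widehat{FH^j}\ast F(id_X)$ and Definition~\ref{Imu} gives $\widehat{FI^{\lambda}}=F\lambda$, so the required equality of vertical composites in $\cc{D}$ follows by iterating axiom \textbf{N}$\lambda$ in $\cc{D}$; since this holds for every such $F$, the two classes coincide. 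The same argument handles \textbf{N}$\rho$ and \textbf{N}$\theta$1--3 (the latter using both left and right whiskerings). With this replacement your proof is complete and agrees with the paper's.
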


\begin{remark}
If we start with a 2-category $\cc{C}$, then $\Ho$ is also a 2-category.
\cqd
\end{remark}

\medskip

Using in order Remark \ref{rem:Hcomo3sinclases}, Proposition \ref{verticalok} and the definitions in \ref{horizontaldefinition}, \ref{functori} it follows:

\begin{proposition} \label{prop:Hcompde3}
Let $H$ be any homotopy as in Definition \ref{defhpy}. Then $[H]$ can be decomposed as:

$$
[H] = [\eps \circ (h * H^C) \circ \eta] = 
[I^\eps] \circ [h * H^C] \circ [I^\eta] =  
i\eps \circ h \ast [H^C] \circ i\eta.
$$

\vspace{-4ex}

\qed
\end{proposition}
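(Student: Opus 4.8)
The plan is to obtain the three displayed equalities one at a time, reading the chain from left to right and invoking exactly the results in the cited order; the whole argument is a sequence of direct citations.

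First I would establish the leftmost equality $[H] = [\eps \circ (h \ast H^C) \circ \eta]$. This is immediate: Remark~\ref{rem:Hcomo3sinclases} asserts that $H$ and $\eps \circ (h \ast H^C) \circ \eta$ are literally the same $\sigma$-homotopy, hence they determine the same class. Nothing beyond quoting that remark is needed here.

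Next I would handle the middle equality $[\eps \circ (h \ast H^C) \circ \eta] = [I^\eps] \circ [h \ast H^C] \circ [I^\eta]$ by peeling off the two outer $2$-cells of $\C$ with Proposition~\ref{verticalok}. Applying part~1 of that proposition, with $\mu = \eps$ and the $\sigma$-homotopy $(h \ast H^C) \circ \eta$, gives $[\eps \circ ((h \ast H^C) \circ \eta)] = [I^\eps] \circ [(h \ast H^C) \circ \eta]$; then applying part~2, with $\nu = \eta$ and the $\sigma$-homotopy $h \ast H^C$, gives $[(h \ast H^C) \circ \eta] = [h \ast H^C] \circ [I^\eta]$. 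Substituting the second identity into the first yields the middle equality, where the associativity of vertical composition in $\Ho$ noted in \ref{verticaldefinition} lets me suppress the parentheses in the final three-fold composite.

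Finally I would translate the bracket notation into the $i$- and whiskering-notation to reach the rightmost expression. By the definition of $i$ in \ref{functori} one has $[I^\eps] = i(\eps)$ and $[I^\eta] = i(\eta)$, and by the definition of the horizontal composition of an arrow with a sequence of length one in \ref{horizontaldefinition} one has $[h \ast H^C] = h \ast [H^C]$; substituting these three identifications (and recalling the binding convention that makes $h \ast [H^C]$ the innermost factor) completes the chain. I do not expect a genuine obstacle: every step is a direct citation, so the only care required is bookkeeping, namely matching each of the two removals to the correct clause of Proposition~\ref{verticalok} and reading off the definitions of $i$ and of whiskering with the right variable assignments.
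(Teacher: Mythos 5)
Your proof is correct and follows exactly the paper's own argument, which derives the proposition by citing, in the same order, Remark~\ref{rem:Hcomo3sinclases}, Proposition~\ref{verticalok}, and the definitions in \ref{horizontaldefinition} and \ref{functori}. The only difference is that you spell out the bookkeeping (which the paper leaves implicit), and you do so accurately.
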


We show now that the cylinder $C^{-1}$ and the homotopy $H^{-1}$
(see Definitions \ref{def:Cilindroinversaeidentidad}, \ref{def:Halamenos1} and \ref{def:construccionK}) yield actual inverses in $\Ho$.

\begin{proposition} \label{prop:homotinversible}
For any cylinder $C$,
$[H^C]$ is invertible in $\Ho$  and furthermore, 
$[H^C]^{-1} = [H^{(C^{-1})}]$.
\end{proposition}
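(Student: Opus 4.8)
The plan is to unwind the meaning of invertibility in $\Ho$ and reduce the statement to a one-line computation inside the target of every $2$-functor. By the definition of vertical composition as juxtaposition (see~\ref{verticaldefinition}) and of the identity $2$-cells $id_{d_0} = [I^{d_0}]$, $id_{d_1} = [I^{d_1}]$, asserting that $[H^{C^{-1}}]$ is a two-sided inverse of $[H^C]$ is exactly the pair of equalities $[H^{C^{-1}}, H^C] = [I^{d_0}]$ and $[H^C, H^{C^{-1}}] = [I^{d_1}]$. By Definition~\ref{eq:equiventrehomot} together with Definition~\ref{Imu}, it therefore suffices to prove, for every $2$-functor $(\C,\Sigma) \mr{F} (\cc{D}, q\Theta)$, the two identities
\[
\widehat{FH^{C^{-1}}} \circ \widehat{FH^C} = id_{Fd_0}, \qquad \widehat{FH^C} \circ \widehat{FH^{C^{-1}}} = id_{Fd_1}.
\]

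Next I would note that a $2$-functor commutes with the constructions of Definitions~\ref{def:Cilindroinversaeidentidad} and~\ref{def:construccionK}: since the structural cells $\phi,\xi$ of $F$ are identities, one checks directly that $FH^C = H^{FC}$ and $F(H^{C^{-1}}) = H^{(FC)^{-1}}$, where $FC$ is a $\sigma$-cylinder in $\cc{D}$ whose arrow $Fs$ lies in $q\Theta$ and is hence a quasiequivalence. Applying the identity $\widehat{H^C} = \widehat{C}$ recorded in Definition~\ref{beta} inside $\cc{D}$, this yields $\widehat{FH^C} = \widehat{FC}$ and $\widehat{FH^{C^{-1}}} = \widehat{(FC)^{-1}}$.

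The crux is then the relation $\widehat{(FC)^{-1}} = (\widehat{FC})^{-1}$. Recall $\widehat{FC}$ is the unique $2$-cell with $Fs \ast \widehat{FC} = F\widetilde{\alpha}$; since $F\widetilde{\alpha}$ is invertible and $Fs$, being a quasiequivalence, reflects invertible $2$-cells, $\widehat{FC}$ is invertible. Because the inverse cylinder satisfies $\widetilde{\alpha}_{C^{-1}} = \widetilde{\alpha}^{-1}$, the $2$-cell $\widehat{(FC)^{-1}}$ is characterised by $Fs \ast \widehat{(FC)^{-1}} = F(\widetilde{\alpha}^{-1}) = (F\widetilde{\alpha})^{-1}$; on the other hand $Fs \ast (\widehat{FC})^{-1} = (Fs \ast \widehat{FC})^{-1} = (F\widetilde{\alpha})^{-1}$, so uniqueness forces $\widehat{(FC)^{-1}} = (\widehat{FC})^{-1}$. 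With this, both displayed composites collapse to $(\widehat{FC})^{-1} \circ \widehat{FC} = id_{Fd_0}$ and $\widehat{FC} \circ (\widehat{FC})^{-1} = id_{Fd_1}$, valid for every $F$, which closes the argument. I do not expect a genuine obstacle here; the only delicate points are verifying that $F$ transports the inverse-cylinder data correctly and that whiskering by $Fs$ is compatible with vertical inversion, and both are controlled precisely by the hypothesis $Fs \in q\Theta$.
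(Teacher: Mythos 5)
Your proposal is correct and follows essentially the same route as the paper's proof: reduce to checking, for every 2-functor $(\C,\Sigma) \mr{F} (\cc{D},q\Theta)$, that $\widehat{FH^C} = \widehat{FC}$ and $\widehat{FH^{C^{-1}}} = \widehat{FC}^{-1}$, whence the two composites are identities. The only difference is that you spell out the step the paper dismisses with ``similarly'' --- namely the uniqueness argument, via whiskering with the quasiequivalence $Fs$, showing $\widehat{(FC)^{-1}} = (\widehat{FC})^{-1}$ --- which is a welcome clarification but not a different method.
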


\begin{proof}
For any 2-functor $(\cc{C},\, \cc{W}) \mr{F} (\cc{D},\,\Theta_q)$, we have $\widehat{FH^C} = \widehat{H^{FC}} = \widehat{FC}$, recall from Definition~\ref{beta} that $Fs * \widehat{FC} = \alpha$. Since we also have \mbox{$\widehat{FH^{(C^{-1})}} = \widehat{FC^{-1}}$, $Fs * \widehat{FC} = \alpha^{-1}$,} it follows using that $Fs$ is a quasiequivalence that $[H^C] \circ [H^{(C^{-1})}] = id_{d_1}$ and \mbox{$[H^{(C^{-1})}] \circ [H^C] = id_{d_0}$.}
\end{proof}

\begin{corollary} \label{invertibleH}
The class $[H]$ of any homotopy with  invertible cells is invertible in $\Ho$, and furthermore, $[H]^{-1} = [H^{-1}]$.
\end{corollary}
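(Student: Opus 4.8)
The plan is to prove invertibility by exhibiting $[H^{-1}]$ as a two-sided inverse, which amounts to verifying the two identities $[H]\circ[H^{-1}] = id_g$ and $[H^{-1}]\circ[H] = id_f$ in $\Ho$. The whole argument will run through the decomposition of Proposition~\ref{prop:Hcompde3}, together with the facts that $i$ is a $2$-functor (Proposition~\ref{funtoriandHo}) and that $[H^C]$ is invertible with inverse $[H^{C^{-1}}]$ (Proposition~\ref{prop:homotinversible}); no new computation with pseudofunctors is needed.

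First I would record the decomposition of both classes. By Proposition~\ref{prop:Hcompde3},
$$[H] = i(\eps) \circ h \ast [H^C] \circ i(\eta).$$
Since $H$ has invertible cells, the $\sigma$-homotopy $H^{-1} = (C^{-1}, h, \eps^{-1}, \eta^{-1})$ of Definition~\ref{def:Halamenos1} is defined, with inverse cylinder $C^{-1}$ as in Definition~\ref{def:Cilindroinversaeidentidad}, its $\eta$-cell being $\eps^{-1}$ and its $\eps$-cell being $\eta^{-1}$. Applying the same proposition to $H^{-1}$ then gives
$$[H^{-1}] = i(\eta^{-1}) \circ h \ast [H^{C^{-1}}] \circ i(\eps^{-1}).$$
Reading off these two formulas with the four structural cells in the correct slots is the one point that requires care; everything that follows is formal cancellation.

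Next I would compose and telescope. For instance,
$$[H^{-1}] \circ [H] = i(\eta^{-1}) \circ h \ast [H^{C^{-1}}] \circ i(\eps^{-1}) \circ i(\eps) \circ h \ast [H^C] \circ i(\eta).$$
Because $i$ is a $2$-functor, $i(\eps^{-1}) \circ i(\eps) = i(\eps^{-1}\circ\eps) = i(id_{h \ast d_1}) = id$, so the inner pair drops out. Axiom~{\bf W3} in $\Ho$ turns the remaining $h \ast [H^{C^{-1}}] \circ h \ast [H^C]$ into $h \ast ([H^{C^{-1}}] \circ [H^C])$, which by Proposition~\ref{prop:homotinversible} equals $h \ast id_{d_0} = id_{h \ast d_0}$. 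A final use of $2$-functoriality collapses $i(\eta^{-1}) \circ i(\eta) = i(id_f) = id_f$. The symmetric computation (cancelling the $\eta$ pair first, then $[H^C]\circ[H^{C^{-1}}] = id_{d_1}$, then the $\eps$ pair) yields $[H]\circ[H^{-1}] = id_g$, completing the proof.

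There is no genuine obstacle here: the invertibility of $[H^C]$ is already in hand and the rest is $2$-functoriality of $i$ plus whiskering axiom~{\bf W3}. The only thing to watch is the bookkeeping in the second paragraph, namely correctly identifying the four cells of $H^{-1}$ and keeping track of which telescoping cancellation (the one at $d_0$ or the one at $d_1$) belongs to which of the two composites. As a sanity check one could instead argue directly from Definition~\ref{eq:equiventrehomot}, verifying for every $2$-functor $(\C,\Sigma)\mr{F}(\cc{D},q\Theta)$ that $\widehat{FH^{-1}} \circ \widehat{FH} = id_{Ff}$ using that $\widehat{FC^{-1}} = \widehat{FC}^{-1}$; this reproduces exactly the same cancellations at the level of $\cc{D}$.
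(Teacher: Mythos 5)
Your proof is correct and follows essentially the same route as the paper's: decompose $[H]$ and $[H^{-1}]$ via Proposition~\ref{prop:Hcompde3}, then collapse the composite using $2$-functoriality of $i$ together with $[H^{C}]^{-1}=[H^{C^{-1}}]$ from Proposition~\ref{prop:homotinversible}. The paper merely states the cancellation in one line, whereas you spell out the telescoping (including the {\bf W3} step), but the argument is the same.
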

\begin{proof}
By Proposition \ref{prop:Hcompde3}, 
$[H] \circ [H^{-1}] = 
i\eps \circ h \ast [H^C] \circ i\eta
\circ 
i(\eta^{-1}) \circ h \ast [H^{(C^{-1})}] \circ i(\eps^{-1})$, which by Proposition \ref{prop:homotinversible} collapses to the identity. The other composition is similar.
\end{proof}

\noindent {\bf The universal property of $i$.} 
We will prove that, under some natural conditions on the class $\cc{W}$, the 2-functor \mbox{$\C \mr{i}\Ho$} is the localization of $\C$ with respect to $\cc{W}$. 
It should be noted that $i$ 
has the universal property of making the arrows of $\cc{W}$ into equivalences in a \emph{strict} 2-categorical sense. {By this we mean that, for any bicategory $\cc{D}$, precomposition with $i$ yields an isomorphism of the appropriate Hom-bicategories, and not just a biequivalence (see details in Definition \ref{localizacion} below).} 
We state precisely what we mean by localization of $\C$ with respect to $\cc{W}$ (See Section \ref{sec:intro} for a comparison with \cite{PRONK2}): 


\begin{definition}\label{localizacion} 
	A pseudofunctor $(\C,\cc{W})\mr{i} (\cc{E},\Theta)$ is the localization of $\C$ with respect to $\cc{W}$ if it is universal in the following sense: For any bicategory $\cc{D}$, precomposition with $i$, \mbox{$\Hom(\cc{E},\cc{D}) \mr{i^*} \Hom_{(\cc{W},\Theta)}(\C,\cc{D})$} is a biequivalence of bicategories, where $\Hom_{(\cc{W},\Theta)}(\C,\cc{D})$ stands for the full subbicategory of $\Hom(\C,\cc{D})$ consisting of those pseudofunctors that map the arrows of $\cc{W}$ to equivalences. When $i^*$ is an isomorphism, we say that $i$ is a {\em strict} localization or a localization in a strict sense.
\end{definition}

We begin by stating and proving various results which lead to Theorem \ref{teo:mediapuposta} and Corollary~\ref{teo:previo pu}. This theorem is proven \emph{without any hypothesis on $\cc{W}$}, and shows that $i^*$ will be an isomorphism of bicategories as soon as it takes its values in the subbicategory $\Hom_{(\cc{W},\Theta)}(\C,\cc{D})$. Then we show that, under two natural conditions on $\cc{W}$, $i$ maps the arrows of $\cc{W}$ to equivalences, and thus the desired result follows.

For a cylinder $C$ and a 2-functor $F: (\cc{C},\cc{W}) \mr{} (\cc{D},\Theta_q)$, recall that $\widehat{FH^{C}} = \widehat{FC}$, which is the unique 2-cell such that $Fs * \widehat{FC} = F\alpha$.

\begin{lemma} \label{lema:paramediapu}
For any cylinder $C$, we have $[s * H^C] = i\alpha$
\end{lemma}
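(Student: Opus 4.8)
The plan is to reduce the statement to the defining characterization of $i$ recorded in \ref{functori} and then let the whiskering collapse against the definition of $\widehat{FC}$. First I would note that both sides are $2$-cells with the same source and target in $\Ho$: by Definition~\ref{def:construccionK} the $\sigma$-homotopy $H^C$ goes from $d_0$ to $d_1$, so its whiskering $s \ast H^C$ of \ref{arrow*homotopy} goes from $s \ast d_0$ to $s \ast d_1$, while by Definition~\ref{defcylestiloQuillen} the $2$-cell $\widetilde{\alpha} = \alpha_1^{-1} \circ \alpha_0$ of $\C$ also goes from $s \ast d_0$ to $s \ast d_1$. Hence $i\widetilde{\alpha}$ and $[s \ast H^C]$ are comparable, and by the characterization in \ref{functori} it suffices to prove that $\widehat{F(s \ast H^C)} = F\widetilde{\alpha}$ for every $2$-functor $(\C,\Sigma) \mr{F} (\cc{D},q\Theta)$.

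To establish this equality I would apply item $3$ of Proposition~\ref{prop:cuadradowhiskering} with the post-composing arrow $r$ taken to be $s \colon W \to Z$; this yields $\widehat{F(s \ast H^C)} = Fs \ast_F \widehat{FH^C}$. Now I use the two facts recalled just before the statement of the lemma: that $\widehat{FH^C} = \widehat{FC}$, and that $\widehat{FC}$ is by definition the unique $2$-cell satisfying $Fs \ast_F \widehat{FC} = F\widetilde{\alpha}$. Substituting gives $\widehat{F(s \ast H^C)} = Fs \ast_F \widehat{FC} = F\widetilde{\alpha}$, which is exactly what was needed.

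There is essentially no obstacle here: the content of the lemma is precisely that whiskering by $s$ undoes the passage from $\widetilde{\alpha}$ to $\widehat{FC}$, and this is built into the very definition of $\widehat{FC}$. The only points requiring care are matching the variance in Proposition~\ref{prop:cuadradowhiskering}(3) so that $s$ acts as the arrow $r$ whiskered on the outside, and remembering that for a $2$-functor $\ast_F = \ast$, so that no structural cells $\phi$ enter the computation.
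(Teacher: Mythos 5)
Your proof is correct and follows essentially the same route as the paper's: both reduce the claim via the characterization of $i$ in \ref{functori} to showing $\widehat{F(s \ast H^C)} = F\widetilde{\alpha}$ for every 2-functor $(\C,\Sigma) \mr{F} (\cc{D},q\Theta)$, and both obtain this from Proposition~\ref{prop:cuadradowhiskering}(3) together with $\widehat{FH^C} = \widehat{FC}$ and the defining property $Fs \ast \widehat{FC} = F\widetilde{\alpha}$. Your additional remarks (the source/target sanity check, and that $\ast_F = \ast$ for 2-functors) are accurate but not needed beyond what the paper records just before the lemma.
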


\begin{proof}
Let $F: (\cc{C},\cc{W}) \mr{} (\cc{D},\Theta_q)$ be a 2-functor, we compute using Proposition~\ref{prop:cuadradowhiskering} $\widehat{F(s * H^C)} = Fs * \widehat{FH^C} = Fs * \widehat{FC} = F\alpha$ and the Lemma follows by \ref{functori}.
\end{proof}

\begin{proposition} \label{prop:mediapu}
Let $\Ho \mr{G} \cc{D}$ be a 2-functor such that 
$Gi\cc{W} \subseteq \Theta_q$, and let $H$ be a homotopy. Then \mbox{$G[H] = \widehat{GiH}$.} 
\end{proposition}
\begin{proof} 
Consider $F = Gi: (\cc{C},\cc{W}) \mr{} (\cc{D},\Theta_q)$. Note that $F$ equals $G$ on objects and arrows. 
Let $C$ be the cylinder of $H$. From Lemma~\ref{lema:paramediapu} and the definition in \ref{horizontaldefinition}
it follows $s \ast [H^{C}] = i\alpha$. Applying $G$ we have $Fs * G[H^{C}] = F\alpha$, and thus $G[H^{C}] = \widehat{F(H^{C})}$. 
Then we compute, using Proposition \ref{prop:Hcompde3}, functoriality of $G$, and Remark \ref{def:FyHinducen2cell}:

\smallskip

$G[H] 
 = Gi\varepsilon \circ Gh \ast G[H^{C}] \circ Gi\eta
 = F\varepsilon \circ Fh \ast \widehat{F(H^{C})} \circ F\eta = \widehat{FH}$.
\end{proof}
\begin{corollary} \label{idetermina}
Let $\Ho \mr{G} \cc{D}$ be any 2-functor such that 
$Gi\cc{W} \subseteq \Theta_q$. Then, the composite $Gi$  completely determines $G$.
\end{corollary}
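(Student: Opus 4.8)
The plan is to exhibit every piece of data defining $G$ --- its values on objects, on arrows, and on 2-cells --- as something computed entirely from the composite $F := Gi$, so that any two such 2-functors agreeing after precomposition with $i$ must coincide.

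First I would dispose of the 0- and 1-dimensional data. Since $i$ is the identity on objects and on arrows (see \ref{functori}) and $\Ho$ has the same objects and arrows as $\C$, for every object $X$ and every arrow $f$ we have $GX = G(iX) = FX$ and $Gf = G(if) = Ff$. Hence the action of $G$ on objects and arrows is literally the action of $Gi$, and in particular is recovered from $Gi$.

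Next I would treat 2-cells, which is where Proposition~\ref{prop:mediapu} does all the work. An arbitrary 2-cell of $\Ho$ is a class $[H^n,\dots,H^1]$ of a finite sequence of $\sigma$-homotopies, and by the vertical composition formula of \ref{verticaldefinition} it decomposes as $[H^n] \circ \dots \circ [H^1]$. Because $G$ is a 2-functor it preserves vertical composition, so $G[H^n,\dots,H^1] = G[H^n] \circ \dots \circ G[H^1]$, and it is enough to determine $G$ on the class of a single $\sigma$-homotopy. For this I invoke Proposition~\ref{prop:mediapu}, which (using the hypothesis $Gi\Sigma \subset q\Theta$) gives $G[H] = \widehat{GiH} = \widehat{FH}$. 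The 2-cell $\widehat{FH}$ is built solely from the 2-functor $F = Gi : (\C,\Sigma) \mr{} (\cc{D},q\Theta)$ via the construction of Remark~\ref{def:FyHinducen2cell}, so it depends only on $Gi$.

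Combining the two observations, every datum defining $G$ is recovered from $Gi$, whence $Gi$ completely determines $G$; equivalently, if $G'$ is another such 2-functor with $G'i = Gi$ then $G' = G$. I do not expect any genuine obstacle here, since the essential difficulty has already been absorbed into Proposition~\ref{prop:mediapu}. The only step that warrants a word of justification is the reduction from an arbitrary sequence to single-homotopy classes, and this rests on nothing more than the definition of vertical composition in $\Ho$ together with the 2-functoriality of $G$.
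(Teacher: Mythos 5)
Your proof is correct and follows essentially the same route as the paper: it recovers $G$ on objects and arrows from the triviality of $i$, decomposes a general 2-cell $[H^n,\dotsc,H^1]$ as $[H^n]\circ\dotsb\circ[H^1]$, and applies Proposition~\ref{prop:mediapu} to get $G[H]=\widehat{GiH}$. The paper's own argument is exactly this computation, so there is nothing to add.
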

\begin{proof}
Since $\, i \,$ is trivial at the level of objects and arrows clearly $GX = GiX$ and \mbox{$Gf = Gif$.} 
The computation $G[H^n,...,H^1] = 
G[H^n] \circ ... \circ G[H^1] = 
\widehat{GiH^n} \circ ... \circ \widehat{GiH^1}$ (which follows from Proposition~\ref{prop:mediapu})
finishes the proof.
\end{proof}

\begin{theorem} \label{2-functorcase} 
Let $\C \mr{i} \Ho$ be the 2-functor in \ref{funtoriandHo}. Then,   
for any bicategory $\cc{D}$ and any 2-functor $(\C,\cc{W}) \mr{F} (\cc{D},\Theta_q)$, there is a unique extension of $F$ to $\Ho$. That is, there is a 2-functor $G: \Ho \mr{} \cc{D}$, unique such that $G i = F$. Note that by Proposition 
\ref{prop:mediapu} the value of $G$ on the class of a homotopy $H$ is necessarily 
$\widehat{FH}$. 
\end{theorem}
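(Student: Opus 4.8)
The plan is to define $G$ by the formula that Proposition~\ref{prop:mediapu} already shows to be forced: set $GX = FX$ and $Gf = Ff$ on objects and arrows, and $G[H^n,\dotsc,H^1] = \widehat{FH^n} \circ \dotsb \circ \widehat{FH^1}$ on $2$-cells; then verify that this is a well-defined $2$-functor with $Gi = F$. Uniqueness requires no new work: if $G'$ is another $2$-functor with $G'i = F$, then $G'i\Sigma = F\Sigma \subset q\Theta$, so Corollary~\ref{idetermina} applies to $G'$ and shows it is completely determined by $G'i = F$; hence $G' = G$.

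First I would check that $G$ is well defined on $2$-cells, which is immediate from Definition~\ref{eq:equiventrehomot}: the equality $[H^n,\dotsc,H^1] = [K^m,\dotsc,K^1]$ holds exactly when $\widehat{FH^n} \circ \dotsb \circ \widehat{FH^1} = \widehat{FK^m} \circ \dotsb \circ \widehat{FK^1}$ for \emph{every} $2$-functor $(\C,\Sigma) \mr{F} (\cc{D},q\Theta)$, and our fixed $F$ is among these. Next I would verify that $G$ restricts to a functor on each hom-category: preservation of vertical composition is just the definition of juxtaposition in \ref{verticaldefinition} read through the formula for $G$, while $G(id_f) = G[I^f] = \widehat{FI^f} = Ff = id_{Ff}$ by Definition~\ref{Imu}.

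The heart of the verification is the horizontal structure. Since $F$ is a $2$-functor we have $G(g \ast f) = F(g \ast f) = Fg \ast Ff = Gg \ast Gf$ and $G(id_X) = F(id_X) = id_{FX}$ strictly, so the structural cells $\phi^G$ and $\xi^G$ may be taken to be identities and it remains only to check that $G$ preserves whiskerings. For an arrow $Y \mr{r} Y'$, Proposition~\ref{prop:cuadradowhiskering}(3) gives, for each $k$ (with $\ast_F = \ast$ since $F$ is a $2$-functor, recall Definition~\ref{def:compositionsubF}),
\[
\widehat{F(r \ast H^k)} = Fr \ast \widehat{FH^k}.
\]
Composing these vertically and using the left-whiskering part of axiom {\bf W3} in $\cc{D}$ to pull $Fr$ out of the vertical composite yields $G(r \ast [H^n,\dotsc,H^1]) = Fr \ast G[H^n,\dotsc,H^1] = Gr \ast G[H^n,\dotsc,H^1]$, and dually for whiskering on the right by an arrow $X' \mr{\ell} X$ via Proposition~\ref{prop:cuadradowhiskering}(4). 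Preservation of the horizontal composition of arbitrary $2$-cells (that is, the naturality of $\phi^G = id$) then follows from these two whiskering identities together with the vertical functoriality already established and axiom {\bf W1} in $\cc{D}$, exactly as in the reduction of \ref{sin:whiskerings}.

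Finally, the pseudofunctor axioms {\bf P1}, {\bf P2}, {\bf P3} for $G$ are inherited from $F$: the unitors and associator of $\Ho$ were obtained in \ref{functori} by applying $i$ to those of $\C$, so $G$ sends them to $Gi(\lambda) = F\lambda$, $Gi(\rho) = F\rho$ and $Gi(\theta) = F\theta$; since $\phi^G$ and $\xi^G$ are identities, each of {\bf P1}--{\bf P3} for $G$ collapses to the corresponding axiom for the $2$-functor $F$, which holds by hypothesis. This shows $G$ is a $2$-functor. That $Gi = F$ holds on objects and arrows is clear, and on a $2$-cell $\mu$ of $\C$ it reads $G(i\mu) = G[I^\mu] = \widehat{FI^\mu} = F\mu$ by Definition~\ref{Imu}, completing the construction. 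The only genuinely delicate point is the well-definedness of $G$ on classes of homotopy sequences; but this has been arranged by the very definition of the equivalence relation, so the remaining obstacle is merely the careful bookkeeping of the $2$-functor axioms, each step of which is supplied by Proposition~\ref{prop:cuadradowhiskering} and the defining constructions of $\Ho$.
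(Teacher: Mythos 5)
Your proposal is correct and follows essentially the same route as the paper: you define $G$ by the forced formula $GX = FX$, $Gf = Ff$, $G[H^n,\dotsc,H^1] = \widehat{FH^n} \circ \dotsb \circ \widehat{FH^1}$, obtain uniqueness from Corollary~\ref{idetermina}, handle vertical composition via \ref{verticaldefinition}, and reduce horizontal composition to whiskerings as in \ref{sin:whiskerings} using Proposition~\ref{prop:cuadradowhiskering}. Your write-up is in fact slightly more explicit than the paper's (spelling out well-definedness, the identity $2$-cells, the use of {\bf W3} to pass from single homotopies to sequences, and the triviality of the structural cells), but these are details the paper leaves implicit rather than a different argument.
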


\begin{proof}
By Corollary \ref{idetermina} we have that the unique possible definition of $G$ is $GX = FX$, 
$Gf  = Ff$ and $G[H^n,...,H^1]  = 
\widehat{FH^n} \circ ... \circ \widehat{FH^1}$. By the definition $i\mu = [I^{\mu}]$ and Definition~\ref{Imu} it follows that $Gi\mu = F\mu$ for any 2-cell $\mu$ of $\cc{C}$. It only remains to show that $G$ is a 2-functor.

Clearly the functoriality of $G$ on objects and arrows holds since $Gi = F$  and $\, i \,$ is trivial. The functoriality for the vertical composition of 2-cells holds by \ref{verticaldefinition}. For the horizontal composition we proceed as explained in \ref{sin:whiskerings}, that is, we consider only horizontal compositions between 
2-cells and arrows. 
It suffices to check this on 2-cells given by a single homotopy. 
Let $r$ and $H$ as in \ref{arrow*homotopy}, recall \ref{horizontaldefinition} and Proposition~\ref{prop:cuadradowhiskering}. Then:
$$
G(r \ast [H]) = G[r \ast H] = \widehat{F(r \ast H)} = 
F(r)  \ast \widehat{FH} = G(r) \ast G[H].
$$
The case $[H] \ast \ell$ is similar. 
\end{proof}

\begin{remark} \label{mejortenerlo}	
In the situation of the theorem above, for $\xi,\xi_1,\dotsc,\xi_n$ 2-cells of $\Ho$, we have: 
$$ \xi = [H^n] \circ \dotsb \,[H^2] \circ [H^1] 
\, \iff \, G(\xi) = 
\widehat{FH^n} \circ \dotsb \widehat{FH^2} \circ \widehat{FH^1}$$
$$ [H] = \xi_n \circ \dotsb \xi_2 \circ \xi_1 
\, \iff \, \widehat{FH} = 
G(\xi_n) \circ \dotsb G(\xi_2)  \circ G(\xi_1).$$ 
\vspace{-6ex}

\qed
\end{remark}

We pass now to prove the general case of Theorem 
\ref{2-functorcase} for pseudofunctors. 
Let  $(\C,\cc{W}) \mr{F} (\cc{D},\Theta_q)$ be a pseudofunctor, and consider its factorization through $\C_F$ as in \ref{sin:primerfactorizacion}. {Let $\Ho \mr{G} \cc{D}$ be any pseudofunctor such that $G i = F$},   
applying Proposition \ref{prop:upoffactoriz} we have that there is a 
unique 2-functor $E$ such that $F_2 = Ei$ and 
$G = F_1 E$, as in the following diagram:
$$
\xymatrix{\C \ar[rr]^F \ar[rd]^{i} \ar@/_2ex/[ddr]_{F_2} && \cc{D} \ar@{<-}[dl]_{G} \ar@{<-}@/^2ex/[ddl]^{F_1} \\
& \Ho \ar@{-->}[d]^{E} \\ & \C_F  }
$$

{The reader should note that this construction is independent of Theorem \ref{2-functorcase}, which given $F_2$ also yields a unique 2-functor $E$ such that $F_2 = Ei$. The fact that this is thus the same 2-functor $E$ is the ``trick" that allows to prove item 1 of Theorem \ref{teo:mediapuposta} below.}

\vspace{1ex}

{
\begin{theorem} \label{teo:mediapuposta}
Let $\C \mr{i} \Ho$ be the 2-functor in \ref{funtoriandHo}. Then precomposing with $i$ establishes a biequivalence of bicategories, which in fact is an isomorphism:
$$
\Hom_{(i\cc{W},\, \Theta)}(\cc{H}o(\cc{C}, \cc{W}),\, \cc{D})
\mr{i^*}
\Hom_{(\cc{W},\, \Theta)}(\cc{C},\, \cc{D})
$$
\end{theorem}}
\begin{proof} 
{The statement can be divided in the following three items:} 

1.  For any bicategory $\cc{D}$ and any pseudofunctor $(\C,\cc{W}) \mr{F} (\cc{D},\Theta)$, there exists a unique extension of $F$ to $\Ho$. That is, there is a pseudofunctor $\Ho \mr{F'} \cc{D}$, unique such that $F' i = F$
(which clearly then maps $i\cc{W}$ to $\Theta$). Furthermore,
the value of $F'$ on the class of a homotopy $H$ is $\widehat{FH}$, that is, $F'[H] = \widehat{FH}$. 
 
2. For every pseudonatural transformation $F \Mr{\theta} G: (\C,\cc{W}) \mr{} (\cc{D},\Theta)$ there is a pseudonatural transformation $F' \Mr{\theta'} G'$ unique such that 
$\theta' i = \theta$. 

3. For every modification $\theta \mr{\rho} \eta: F \Mr{} G: (\C,\cc{W}) \mr{} (\cc{D},\Theta)$ there is a modification $\theta' \mr{\rho} \eta'$ unique such that $\rho' i = \rho$. 

\vspace{1ex}

\emph{1.} Let $F = F_1F_2$ be the factorization through 
$(\C_F,\Theta_q)$ as explained in \ref{sin:primerfactorizacion}, \ref{quasiequivalencias}. Let $F'_2$ be the extension of $F_2$ given by Theorem \ref{2-functorcase}. Set $F' = F_1F'_2$. Then, 
\mbox{$F'i =  F_1F'_2 i = F_1F_2 = F$.} The uniqueness of $F'$ is given by Proposition \ref{prop:upoffactoriz} plus the uniqueness of $F'_2$. For the second statement we compute  
\mbox{$F'[H] = F_1F'_2[H] = 
F_1 (\widehat{F_2H}) = \widehat{FH}$,} this last equality given by Proposition  \ref{prop:bastaconlos2functors}. 

\vspace{1ex}

\emph{2.} 
Since $i$ is the identity at the level of objects and arrows, the only possible definitions are $\theta'_X = \theta_X$, $\theta'_f = \theta_f$ for every $X,f$. 
Since the structural morphisms of $F'$ (resp. $G'$) are those of $F$ (resp. $G$), axioms {\bf PN0} and {\bf PN1} 
for $\theta'$ are equivalent to those for $\theta$. For axiom {\bf PN2}, we have to show the following equation for every homotopy $\xymatrix{ f \ar@2{~>}@<0.25ex>[r]^{H} & g}$ as in Definition~\ref{defhpy}:

$$\vcenter{\xymatrix@C=-0pc{
                      {G}{f} \dcellbymedio{\widehat{{G}H}}  
		      &&
		      \theta_{X}  \did 
		      \\
		       {G}{g} \dl
		       &&
		      \theta_{X} \dr \ar@{}[dll]|{\theta_{g}}
		      \\
		       \theta_{{Y}} 
		       && 
		      {F}{g} 
		      }}
     \vcenter{\xymatrix@C=-.4pc{\quad = \quad }}
      \vcenter{\xymatrix@C=-0pc{
		      {G}{f} \dl
		      &&
		      \theta_{X} \dr \ar@{}[dll]|{\theta_{f}} 
		      \\
		      \theta_{{Y}} \did
		      &&
		      {F}{f} \dcellbymedio{\widehat{{F}H}} 
		      \\ 
		      \theta_{Y}
		      &&
		      {F}{g} 
		      }}$$

\noindent That is, by the definition in formula \eqref{eq:FHsombrero}, 

\begin{equation} \label{eq:qvq3}
\vcenter{\xymatrix@C=-1pc@R=1.5pc{
& Gf \dcellb{G\eta} && \quad\quad & \theta_{X}  \did \\
& G(h\ast d_0) \op{\phi} &&& \theta_{X}  \did \\
Gh \did && Gd_0 \dcellbbis{\widehat{GC}} && \theta_{X}  \did \\
Gh && Gd_1 && \theta_{X}  \did \\
& G(h\ast d_1) \cl{\phi} \dcellb{G\eps} &&& \theta_{X}  \did  \\
& {G}{g} \dl & \ar@{}[dr]|{\theta_{g}} && \theta_{X} \dr   \\
& \theta_{{Y}} &&& {F}{g} }}
     \vcenter{\xymatrix@C=-.4pc{\quad = \quad }}
\vcenter{\xymatrix@C=-1pc@R=1.5pc{
		      {G}{f} \dl
		      & \quad\quad & \ar@{}[d]|{\theta_{f}} &
		      \theta_{X} \dr  
		      \\
\theta_{{Y}} \did && & Ff \dcellb{F\eta} \\
\theta_{{Y}} \did && & F(h\ast d_0) \op{\phi} \\
\theta_{{Y}} \did && Fh \did && Fd_0 \dcellbbis{\widehat{FC}} \\
\theta_{{Y}} \did && Fh && Fd_1 \\
\theta_{{Y}} \did && & F(h\ast d_1) \cl{\phi} \dcellb{F\eps} \\
\theta_{{Y}} && & Fg }}
\end{equation}

\noindent where $Fs \ast_F \widehat{FC} = F\alpha$ and $Gs \ast_G \widehat{GC} = G\alpha$. 
Using axiom {\bf PN2} for $\theta$ on the 2-cell $\alpha$ we have:

\begin{equation} \label{eq:arevertir}
\vcenter{\xymatrix@C=-1pc@R=1.5pc{
& G(s\ast d_0) \op{\phi} && \quad\; & \theta_{X}  \did \\
Gs \did && Gd_0 \dcellbbis{\widehat{GC}} && \theta_{X}  \did \\
Gs && Gd_1 && \theta_{X}  \did \\
& G(s\ast d_1) \cl{\phi} \dl & \dcr{\theta_{s\ast d_1}} && \theta_{X} \dr  \\
& \theta_{{Z}} &&& F(s\ast d_1)  }}  
\vcenter{\xymatrix@C=0ex{ \; = \; }}
\vcenter{\xymatrix@C=-1pc@R=1.5pc{
 G(s\ast d_0) \dl & \dcr{\theta_{s\ast d_0}} && \dr \theta_{X}  \\
 \theta_Z \did & \quad\; && F(s\ast d_0) \op{\phi} \\
\theta_Z \did && Fs \did && Fd_0 \dcellbbis{\widehat{FC}} \\
\theta_Z \did && Fs && Fd_1 \\
 \theta_{{Z}} &&& F(s\ast d_1) \cl{\phi} }}
\end{equation}

Using axiom {\bf PN1} twice (for the arrows $s, d_0$ and the arrows $s, d_1$)  the first equality below follows from \eqref{eq:arevertir}:

$$\vcenter{\xymatrix@C=0pc@R=1.5pc{
Gs \did && Gd_0 \dcellbbis{\widehat{GC}} && \theta_{X}  \did \\
Gs \did && Gd_1 \dl & \dc{\theta_{d_1}} & \dr \theta_{X} \\
Gs \dl & \dc{\theta_s} & \dr \theta_C && Fd_1 \did \\
\theta_Z && Fs && Fd_1}}
\vcenter{\xymatrix@C=0ex{ \; = \; }}
\vcenter{\xymatrix@C=0pc@R=1.5pc{
Gs \did && Gd_0 \dl & \dc{\theta_{d_0}} & \dr \theta_{X} \\
Gs \dl & \dc{\theta_s} & \dr \theta_{C} && Fd_0 \did \\ 
\theta_Z \did && Fs \did && Fd_0 \dcellbbis{\widehat{FC}} \\
\theta_Z && Fs && Fd_1 }}
\vcenter{\xymatrix@R=0pc{\eqref{ascensor}  \\ = }}
\vcenter{\xymatrix@C=0pc@R=1.5pc{
Gs \did && Gd_0 \dl & \dc{\theta_{d_0}} & \dr \theta_{X} \\
Gs \did && \theta_{C} \did && Fd_0 \dcellbbis{\widehat{FC}} \\ 
Gs \dl & \dc{\theta_s} & \dr \theta_{C} && Fd_1 \did \\
\theta_Z && Fs && Fd_1 }}
$$

Since $\theta_s$ is invertible, and $Gs$ is a quasiequivalence, it follows

\begin{equation} \label{eq:qvq}
\vcenter{\xymatrix@C=0ex{Gd_0 \dcellbbis{\widehat{GC}} && \theta_X \did \\
Gd_1 \dl & \dc{\theta_{d_1}} & \dr \theta_X \\
\theta_C && Fd_1   }}
\vcenter{\xymatrix@C=0ex{ \; = \; }}
\vcenter{\xymatrix@C=0ex{Gd_0 \dl & \dc{\theta_{d_0}}  & \dr \theta_X \\
\theta_C \did && Fd_0 \dcellbbis{\widehat{FC}} \\
\theta_C && Fd_1 }}
\end{equation}

Now, we reverse the path that took us from \eqref{eq:arevertir} to \eqref{eq:qvq}, but with $h$ instead of $s$. First we compose \eqref{eq:qvq} with $Gh$ and $\theta_h$ and use \eqref{ascensor}, it follows

$$\vcenter{\xymatrix@C=0pc@R=1.5pc{
Gh \did && Gd_0 \dcellbbis{\widehat{GC}} && \theta_{X}  \did \\
Gh \did && Gd_1 \dl & \dc{\theta_{d_1}} & \dr \theta_{X} \\
Gh \dl & \dc{\theta_h} & \dr \theta_C && Fd_1 \did \\
\theta_Y && Fh && Fd_1}}
\vcenter{\xymatrix@C=0ex{ \; = \; }}
\vcenter{\xymatrix@C=0pc@R=1.5pc{
Gh \did && Gd_0 \dl & \dc{\theta_{d_0}} & \dr \theta_{X} \\
Gh \dl & \dc{\theta_h} & \dr \theta_{C} && Fd_0 \did \\ 
\theta_Y \did && Fh \did && Fd_0 \dcellbbis{\widehat{FC}} \\
\theta_Y && Fh && Fd_1 }}$$

Using axiom {\bf PN1} as above, it follows 

\begin{equation} \label{eq:ausar}
\vcenter{\xymatrix@C=-1pc@R=1.5pc{
& G(h\ast d_0) \op{\phi} && \quad\; & \theta_{X}  \did \\
Gh \did && Gd_0 \dcellbbis{\widehat{GC}} && \theta_{X}  \did \\
Gh && Gd_1 && \theta_{X}  \did \\
& G(h\ast d_1) \cl{\phi} \dl & \dcr{\theta_{h\ast d_1}} && \theta_{X} \dr  \\
& \theta_{{Y}} &&& F(h\ast d_1)  }}  
\vcenter{\xymatrix@C=0ex{ \; = \; }}
\vcenter{\xymatrix@C=-1pc@R=1.5pc{
 G(h\ast d_0) \dl & \dcr{\theta_{h\ast d_0}} && \dr \theta_{X}  \\
 \theta_Y \did & \quad\; && F(h\ast d_0) \op{\phi} \\
\theta_Y \did && Fh \did && Fd_0 \dcellbbis{\widehat{FC}} \\
\theta_Y \did && Fh && Fd_1 \\
 \theta_{{Y}} &&& F(h\ast d_1) \cl{\phi} }}
 \end{equation}

Finally we compute, starting from the left side in \eqref{eq:qvq3}:

$$\vcenter{\xymatrix@C=-1pc@R=1.5pc{
& Gf \dcellb{G\eta} && \quad\quad & \theta_{X}  \did \\
& G(h\ast d_0) \op{\phi} &&& \theta_{X}  \did \\
Gh \did && Gd_0 \dcellbbis{\widehat{GC}} && \theta_{X}  \did \\
Gh && Gd_1 && \theta_{X}  \did \\
& G(h\ast d_1) \cl{\phi} \dcellb{G\eps} &&& \theta_{X}  \did  \\
& {G}{g} \dl & \ar@{}[dr]|{\theta_{g}} && \theta_{X} \dr   \\
& \theta_{{Y}} &&& {F}{g} }}
     \vcenter{\xymatrix@R=0pc{{\bf PN2} \\ = }}
\vcenter{\xymatrix@C=-1pc@R=1.5pc{
& Gf \dcellb{G\eta} && \quad\quad & \theta_{X}  \did \\
& G(h\ast d_0) \op{\phi} &&& \theta_{X}  \did \\
Gh \did && Gd_0 \dcellbbis{\widehat{GC}} && \theta_{X}  \did \\
Gh && Gd_1 && \theta_{X}  \did \\
& G(h\ast d_1) \cl{\phi} \dl & \dcr{\theta_{h\ast d_1}} && \theta_{X} \dr  \\
& \theta_{{Y}} \did &&& F(h\ast d_1) \dcellb{F\eps}   \\
& \theta_{{Y}} &&& {F}{g} }}  
\vcenter{\xymatrix@R=0pc{\eqref{eq:ausar}  \\ = }}
\vcenter{\xymatrix@C=-1pc@R=1.5pc{
 Gf \dcellb{G\eta} & \quad && \theta_{X}  \did \\
 G(h\ast d_0) \dl & \dcr{\theta_{h\ast d_0}} && \dr \theta_{X}  \\
 \theta_Y \did &&& F(h\ast d_0) \op{\phi} \\
\theta_Y \did && Fh \did && Fd_0 \dcellbbis{\widehat{FC}} \\
\theta_Y \did && Fh && Fd_1 \\
 \theta_{{Y}} \did &&& F(h\ast d_1) \cl{\phi} \dcellb{F\eps}   \\
 \theta_{{Y}} &&& {F}{g} }},
 $$

\noindent which equals the right side in \eqref{eq:qvq3} by {\bf PN2}. 

\vspace{1ex}

\emph{3.} 
Since $i$ is the identity at the level of objects, the only possible definition is $\rho'_X = \rho_X$. Since for any arrow $f$ of $\cc{C}$, by the proof of item 2 we have $\theta'_f = \theta_f$ and $\mu'_f = \mu_f$, the equality in axiom {\bf PM} is the same either for $\rho$ or for $\rho'$.
\end{proof}

\begin{remark}\label{dosclases}
Even though this is not needed for the localization result in this article, it is a natural idea to restrict the cylinders to have the map $W \mr{} Z$ in an arbitrary subclass $\cc{W}_0 \subset \cc{W}$ containing the identities, much like {\em fibrant} cylinders are considered classically.
It is easy to check that everything goes exactly in the same way with this added generality, and that we can construct a bicategory
$\cc{H}o^{\cc{W}_0}(\cc{C}, \cc{W})$ that has the same property, {that is precomposition with $i$ establishes an isomorphism of bicategories:}
$$
Hom_{(i\cc{W},\, \Theta)}(\cc{H}o^{\cc{W}_0}(\cc{C}, \cc{W}),\, \cc{D})
\mr{i^*}
Hom_{(\cc{W},\, \Theta)}(\cc{C},\, \cc{D})
$$ 

\vspace{-5ex}

\cqd
\end{remark} 

\begin{corollary}[{of Theorem \ref{teo:mediapuposta}}] \label{teo:previo pu}
Assume $\C \mr{i} \Ho$ maps the arrows of $\cc{W}$ to equivalences. Then, it is the strict localization with respect to $\cc{W}$ in the sense stated in Definition~\ref{localizacion}, {that is precomposition with $i$ establishes an isomorphism of bicategories:}
$$
Hom(\cc{H}o(\cc{C}, \cc{W}),\, \cc{D})
\mr{i^*}
Hom_{(\cc{W},\, \Theta)}(\cc{C},\, \cc{D}).
$$
\end{corollary}       
\begin{proof}
By the assumption {it follows that} $i^*$ takes values in the subcategory $\Hom_{(\cc{W},\Theta)}(\C,\cc{D})$. 
Theorem \ref{teo:mediapuposta} states that it is an isomorphism of bicategories.
\end{proof}
 
\vspace{1ex}

We proceed now to consider two natural conditions on the class $\cc{W}$ which are sufficient to ensure that the assumption in Corollary \ref{teo:previo pu} holds.

\begin{definition} \label{def:3x2}
We say that the class $\cc{W}$ satisfies  the \emph{3 for 2}\footnote{A. Joyal suggested this terminology (instead of the usual 2 for 3) because \textquoteleft you pay for 2 and get 3\textquoteright.} property if for every three arrows $f,g,h$ such that there is an invertible 2-cell $gf \cong h$, whenever two of the three arrows are in $\cc{W}$, so is the third one.
\end{definition}

\begin{definition} \label{def:retrsect}
Let $X \mr{s} Y$, $Y \mr{r} X\in\C$. If there is an invertible 2-cell $r\ast s \cong id_X$, $s$ is called a section for $r$, and $r$ is called a retraction for $s$. An arrow $X \mr{s} Y$ is called a section if there exists $r$ such that $s$ is a section for $r$ and dually an arrow is called a retraction if it admits a section. An arrow that is either a section or a retraction is called a split arrow.
\end{definition}

\begin{proposition} \label{prop:otramediapu}
Assume $\cc{W}$ satisfies the 3 for 2 property. Then any split arrow in $\cc{W}$
is mapped to an equivalence by $\C \mr{i} \Ho$. 
\end{proposition}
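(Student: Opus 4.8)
The plan is to handle both kinds of $w$-split arrow simultaneously. Suppose first that $a\in\Sigma$ is a $w$-section, so by Definition~\ref{def:retrsect} there are arrows $X\mr{a}Y$, $Y\mr{r}X$ together with an invertible $2$-cell $\mu: r\ast a\cong id_X$ in $\C$; the $w$-retraction case is the same data with the membership in $\Sigma$ assigned to the other arrow, so it will be covered by the same computation. The first step I would carry out is purely formal: since $\Sigma$ contains the identities and satisfies $3$ for $2$ (Definition~\ref{def:3x2}), from the invertible $2$-cell $r\ast a\cong id_X$ together with $a,id_X\in\Sigma$ we conclude $r\in\Sigma$ as well. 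Hence both $a$ and $r$ lie in $\Sigma$, and I will in fact show that $i$ sends \emph{both} of them to equivalences, which settles either case.

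I would then exhibit $r$ as a quasi-inverse of $a$ in $\Ho$ by producing the two invertible $2$-cells required by~\ref{quasiequivalencias}. One direction is immediate: as $\C\mr{i}\Ho$ is a $2$-functor (Proposition~\ref{funtoriandHo}), it acts as a functor on each hom-category, so it carries the invertible $\mu$ to an invertible $2$-cell $i\mu: r\ast a\cong id_X$ in $\Ho$.

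The content of the proof is the reverse isomorphism $a\ast r\cong id_Y$ in $\Ho$, which I would obtain from a $\sigma$-cylinder. Consider the $\sigma$-cylinder $C$ for the object $Y$ given by
$$C=(\,W=Y,\ Z=X,\ d_0=a\ast r,\ d_1=id_Y,\ x=r,\ r,\ \alpha_0,\ \alpha_1\,),$$
where the distinguished arrow $W\mr{}Z$ of the cylinder is $r$ (which is a valid choice precisely because $r\in\Sigma$), and where $\alpha_1=id_r: r\ast id_Y\Rightarrow r$ and $\alpha_0=\mu\ast r: r\ast a\ast r\Rightarrow r$ (using the abuse~\ref{sin:abuse} to absorb the unitor $id_X\ast r\cong r$). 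Both $\alpha_0,\alpha_1$ are invertible, so this is a genuine $\sigma$-cylinder with $d_0=a\ast r$ and $d_1=id_Y$. By Definition~\ref{def:construccionK} it determines a $\sigma$-homotopy $a\ast r\mrhpy{H^C}id_Y$, and by Proposition~\ref{prop:homotinversible} its class $[H^C]$ is invertible in $\Ho$, with inverse $[H^{C^{-1}}]$. This supplies the missing invertible $2$-cell $a\ast r\cong id_Y$.

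Combining the two invertible $2$-cells, $r$ is a quasi-inverse of $a$ in $\Ho$, so $a=ia$ is an equivalence there; symmetrically $a$ is a quasi-inverse of $r$, so $r=ir$ is an equivalence as well. Thus the $w$-split arrow of $\Sigma$, whether it is the $w$-section $a$ or the $w$-retraction $r$, is mapped by $i$ to an equivalence. The step I expect to require the most care is verifying that $C$ above is a legitimate $\sigma$-cylinder: this is exactly the point where the $3$ for $2$ hypothesis is indispensable, since it is the only thing guaranteeing that the arrow $r$ occupying the distinguished slot $W\to Z$ belongs to $\Sigma$. Everything else reduces to whiskering the given isomorphism $\mu$ and to the already-established invertibility of the class of a homotopy coming from a cylinder.
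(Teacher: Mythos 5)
Your proposal is correct and follows essentially the same route as the paper: both arguments use 3 for 2 (with identities in $\Sigma$) to place the retraction $r$ in $\Sigma$, take $i$ of the given invertible 2-cell for one direction, and construct the very same $\sigma$-cylinder $C=(Y,X,a\ast r,id_Y,r,r,\mu\ast r,id_r)$ so that Proposition~\ref{prop:homotinversible} yields the invertible 2-cell $a\ast r\Rightarrow id_Y$ in $\Ho$. The only cosmetic difference is that you spell out explicitly that both the $w$-section and $w$-retraction cases are covered, which the paper handles implicitly via the remark that $r\in\Sigma$ if and only if $s\in\Sigma$.
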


\begin{proof} 
Let $X \mr{s} Y$, $Y \mr{r} X\in\C$ and an invertible 2-cell $r\ast s \Mr{\alpha} id_X\in\C$.
Note that by the 3 for 2 property $r$ is in $\cc{W}$ if and only if $s$ is. 
Since we already have \mbox{$r\ast s \Mr{i\alpha} id_X$} in $\Ho$, it remains to show that we have an invertible $2$-cell
 $s\ast r \Mr{} id_Y$. 
Consider the diagram 
$\vcenter{\xymatrix@C=1.5pc@R=1.5pc{Y \ar[rr]^{s * r} \ar[dd]_{id_Y} \ar[rrdd]|{\comw{M^M} r \comw{M^M}} & \ar@{}[dr]|{\Downarrow \; \alpha * r \;} & Y \ar[dd]|{\circ}^{r}\\
\ar@{}[dr]|{ \; \; \Uparrow \; id_r} && \\ 
Y \ar[rr]|{\circ}_{r}  && X}}$ 
(as in Definition \ref{defcylestiloQuillen}) 
which defines the cylinder \mbox{$C=(Y,X,s*r,id_Y,r,r,\alpha *r,id_r)$.} Thus by Proposition \ref{prop:homotinversible} we have the desired invertible 2-cell $[H^C]$.
\end{proof}
    

\begin{corollary} \label{cor:pu}
Assume that $\cc{W}$ satisfies the 3 for 2 property, and that any arrow of $\cc{W}$ can be written (up to isomorphism) as a composition of split arrows of $\cc{W}$. Then the 2-functor $\C \mr{i} \Ho$ maps the arrows of $\cc{W}$ to equivalences.  \qed
\end{corollary}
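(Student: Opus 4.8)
The plan is to reduce the general case to the $w$-split case already settled in Proposition~\ref{prop:otramediapu}, exploiting two standard structural facts about equivalences in a bicategory: they are closed under horizontal composition, and they are stable under isomorphism of arrows. First I would fix an arrow $\sigma \in \Sigma$ and invoke the hypothesis to produce an invertible $2$-cell $\sigma \cong s_n \ast \dotsb \ast s_1$ in $\C$ (recall the abuse of notation \ref{sin:abuse} for omitting parentheses), where each $s_j$ is a $w$-split arrow lying in $\Sigma$.

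Next, since $\Sigma$ satisfies the $3$ for $2$ property and each $s_j$ is a $w$-split arrow of $\Sigma$, Proposition~\ref{prop:otramediapu} applies and yields that $i s_j$ is an equivalence in $\Ho$ for every $j$. Because $\C \mr{i} \Ho$ is the identity on objects and arrows (Proposition~\ref{funtoriandHo}) and the $1$-dimensional structure of $\Ho$ is exactly that of $\C$, the horizontal composite of the $i s_j$ in $\Ho$ is literally the arrow $i(s_n \ast \dotsb \ast s_1) = s_n \ast \dotsb \ast s_1$; being a composite of equivalences, it is again an equivalence.

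Finally I would transport this conclusion along the given isomorphism. Applying the $2$-functor $i$ to the invertible $2$-cell $\sigma \cong s_n \ast \dotsb \ast s_1$ produces an invertible $2$-cell $i\sigma \cong i(s_n \ast \dotsb \ast s_1)$ in $\Ho$, since $2$-functors preserve invertibility of $2$-cells. As an arrow isomorphic to an equivalence is itself an equivalence (if $g'$ is a quasiinverse of $g$ and $f \cong g$, then $g'$ serves as a quasiinverse of $f$ after composing the isomorphisms), we conclude that $i\sigma$ is an equivalence, which is exactly the assertion.

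I do not anticipate any genuine obstacle: all the substantive work was carried out in Proposition~\ref{prop:otramediapu}, and the only remaining points are the two closure properties of equivalences. Both follow routinely, for instance from the characterization recorded in \ref{quasiequivalencias} that $f$ is an equivalence if and only if, for every object $Z$, the induced functors on the hom-categories are equivalences of categories, which makes closure under composition and under isomorphism immediate. This is precisely why the statement can be recorded as a one-line corollary.
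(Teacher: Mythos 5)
Your proposal is correct and is exactly the argument the paper leaves implicit behind the \qed: decompose each arrow of $\Sigma$ (up to isomorphism) into $w$-split arrows of $\Sigma$, apply Proposition~\ref{prop:otramediapu} to each factor, and conclude using that $i$ preserves composites and invertible $2$-cells, together with the standard closure of equivalences under composition and isomorphism. Nothing is missing; you have simply written out the routine details the authors deemed obvious.
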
       

\medskip 
Clearly putting together Corollaries \ref{cor:pu} and \ref{teo:previo pu} we have the main result of this article, a construction of the strict bicategorical localization with respect to a family of arrows satisfying the aforementioned properties. 
As mentioned in the introduction, these properties are satisfied by the weak equivalences between fibrant-cofibrant objects of a model bicategory \cite{DDS.model_bicat}.

\begin{theorem} \label{enfin} 
If $\cc{W}$ satisfies the 3 for 2 property, and any arrow of $\cc{W}$ can be written (up to isomorphism) as a composition of split arrows of $\cc{W}$, then the 2-functor $\C \mr{i} \Ho$ is the strict localization with respect to $\cc{W}$ in the sense stated in Definition~\ref{localizacion}. \qed
\end{theorem}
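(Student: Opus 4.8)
The plan is to observe that Theorem \ref{enfin} is nothing more than the concatenation of the two corollaries already proved, so the proof reduces to checking that their hypotheses chain together without any gap. First I would match hypotheses: the assumptions of Theorem \ref{enfin}---that $\Sigma$ satisfies the 3 for 2 property and that every arrow of $\Sigma$ can be written, up to isomorphism, as a composition of $w$-split arrows of $\Sigma$---are verbatim the hypotheses of Corollary \ref{cor:pu}. Invoking that corollary I obtain that the 2-functor $\C \mr{i} \Ho$ sends every arrow of $\Sigma$ to an equivalence of $\Ho$.

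Second, I would feed this conclusion into Corollary \ref{teo:previo pu}, whose only hypothesis is precisely that $\C \mr{i} \Ho$ maps the arrows of $\Sigma$ to equivalences. Its conclusion is that $i$ is the strong localization of $\C$ with respect to $\Sigma$ in the sense of Definition \ref{localizacion}, which is exactly the statement of Theorem \ref{enfin}. Thus the whole argument is two lines: apply Corollary \ref{cor:pu}, then apply Corollary \ref{teo:previo pu}.

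It is worth recording where the actual content sits, since at the level of this theorem there is no obstacle left to overcome. The \emph{strength} of the localization---the fact that $i^*$ is an honest isomorphism of bicategories rather than a mere pseudoequivalence---is supplied by Theorem \ref{teo:mediapuposta}, which is established with no hypothesis on $\Sigma$ whatsoever. The sole purpose of the two conditions on $\Sigma$ is to guarantee that $i^*$ actually lands in the subbicategory $Hom_{\Sigma,\Theta}(\C,\cc{D})$: Proposition \ref{prop:otramediapu} handles $w$-split arrows (turning each into an equivalence via the $\sigma$-cylinder built from the invertible cells $\alpha \ast r$ and $id_r$, which is invertible in $\Ho$ by Proposition \ref{prop:homotinversible}), and the factorization hypothesis together with the 3 for 2 property then extends this from $w$-split arrows to all of $\Sigma$. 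Consequently the only thing to verify here is that these inputs compose correctly, and the genuine difficulties of the paper---the axiom \textbf{PN2} computation inside Theorem \ref{teo:mediapuposta} and the cylinder construction of Proposition \ref{prop:otramediapu}---have already been dispatched upstream.
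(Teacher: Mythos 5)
Your proposal is correct and is exactly the paper's own proof: the theorem is obtained by chaining Corollary \ref{cor:pu} (which yields that $i$ sends $\Sigma$ to equivalences) into Corollary \ref{teo:previo pu} (which upgrades that to the strong localization statement). Your closing remarks on where the substantive work lies---Theorem \ref{teo:mediapuposta} and Proposition \ref{prop:otramediapu}---also match the paper's own framing of the argument.
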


\vspace{-.2cm}

\bibliographystyle{unsrt}

\begin{thebibliography}{99}
\vspace{-.2cm}
\bibitem{DDS.model_bicat} Descotte M.E., Dubuc, E., Szyld M., \textsl{Model bicategories and their homotopy bicategories}, arXiv:1805.07749 (2018).
\bibitem{TesisEmi} Descotte M.E., \textsl{A theory of 2-pro-objects, a theory of 2-model 2-categories and the 2-model structure for $2$-$\cc{P}ro(\C)$}, PhD Thesis, Universidad de Buenos Aires (2015).
\bibitem{2-pro} Descotte M.E., Dubuc, E., \textsl{A theory of 2-pro-objects}, Cahiers de Topologie et G\'eom\'etrie Diff\'erentielle Cat\'egoriques,  LV-1 (2014), and with expanded proofs arXiv:1406.5762.
\bibitem{DHKS} Dwyer W.G., Hirschhorn P.S., Kan D.M., Smith J.H., \textsl{Homotopy Limit Functors on Model Categories and Homotopical Categories}, AMS Mathematical Surveys and Monographs 113 (2004).
\bibitem{GZ} Gabriel P., Zisman M., \textsl{Calculus of fractions and homotopy theory}, Ergebnisse der Mathematik und ihrer Grenzgebiete. 2. Folge
Volume 35 (1967).
\bibitem{basicbicat} Leinster T. \textsl{Basic Bicategories}, arXiv:math/9810017 (1998).
\bibitem{McL} Mac Lane S. \textsl{Categories for the Working Mathematician}, Graduate Texts in Mathematics Volume 5 (1971).
\bibitem{PRONK2}  Pronk, D. A. \textsl{Etendues and stacks as bicategories of fractions}, Compositio Mathematica Volume 102 no. 3 Pages 243-303 (1996). 
\bibitem{PronkWarren}
Pronk, D. A., Warren M. A., \textsl{Bicategorical fibration structures and stacks, }
Theory and Applications of Categories 29 (2014).
\bibitem{Quillen} Quillen D., \textsl{Homotopical Algebra}, Springer Lecture Notes in Mathematics 43 (1967). 
\bibitem{S.paper11} Szyld M., \textsl{The homotopy relation in a category with weak equivalences}, arXiv:1804.04244 (2018).
\end{thebibliography}

\vspace{-.3cm}.

\begin{appendix}

\section{On the equivalence relation of homotopies.}
It is a natural question if it would be possible to give a \emph{syntactical} definition rather than the \emph{semantical} one for the equivalence relation given in Definition \ref{def:primerdefadhoc}. This is a delicate point, that can be traced back to Quillen's original
work on model categories (see (*) below). As test work for our construction on bicategories we have considered the $2$-localization of a model category using homotopies, and the possibility of giving such a definition in this case, that we call the \emph{germ relation}. Following a question by the referee, we resume here our findings, that probably can be generalized with some work to the general case of a bicategory. 

{Consider homotopies and cylinders as in Definition \ref{defhpy}, but with equalities in place of $2$-cells.}
It is clear and straightforward how to define a morphism of cylinders and to check that they compose. Such a morphism consists of a pair of arrows $W \mr{\phi} W'$,  $Z \mr{\varphi} Z'$ satisfying the evident equations with respect to the cylinder structures. 
 Given two homotopies $H$ and $H'$, we say that they are related if there exists a morphism of cylinders 
$C \xr{(\phi,\, \varphi)} C'$ such that $h' \circ \phi = h$. We call the equivalence relation generated by this relation the \emph{germ relation}, because we are actually computing a colimit of sets: recall diagram \eqref{eq:H}, $f = h*d_0,\; g = h*d_1$. Consider the data  
$X,\;Z,\;Y,\;x,\;f,\;g$ fixed, and all possible 
$W,\; d_0,\; d_1$:
$$ \label{1_eq:H}
\xymatrix@C=8ex
   {
    X \amalg X \ar@<-.5ex>[dr]_{\binom{x}{x}} 
      \ar[r]_{\binom{d_0}{d_1}}
      \ar@/^5ex/[rr]_{\binom{f}{g}}  
   & W \ar[r]^h \ar[d]|{\circ}^s 
   & Y 
   \\ 
   & Z} 
$$
We imagine $\binom{x}{x}$ to be a point of $Z$, $W$ to be a neighborhood of $\binom{x}{x}$, and $h$ a function sending 
$\binom{x}{x}$ to the point $\binom{f}{g}$ of $Y$. This defines a functor into the category of sets, and the equivalence classes are the elements of the colimit in $\cc{S}et$.

\vspace{1ex}

It is easy to see that if two homotopies are related by cylinder morphisms, then they are equivalent as in Definition \ref{def:primerdefadhoc} (so the germ relation identifies less). 
In our work, finding these morphisms is the way in which we have usually shown that different homotopies are equivalent.

Initially we hoped that the germ relation would allow to define a 2-category of classes of homotopies, but all our attempts were blocked near that purpose. 
To put it shortly, we could define the data of a 2-category, and prove all 
the axioms, except for the interchange law {\bf H2} relating both compositions. This shows that a coarse relation identifying both sides of the law is necessary. Our definition of the equivalence relation of homotopies in this paper can be seen as a solution to this problem.

(*) Quillen also defines an equivalence relation between homotopies (see  
\cite[Ch.I $\sharp \,2$]{Quillen}) and shows that the equivalence  classes can be composed vertical and horizontally, but he does not mention the compatibility between both compositions, a condition that he certainly was aware of. We have checked that the relation considered by Quillen is actually our germ relation disguised in a different form.
This suggests that the interchange law does not hold for the equivalence classes of the germ relation.

\section{On vertical composition of 
homotopies.} \label{sec:vertcomp}

It this appendix we consider the question of vertical composition of homotopies, in other words, if, for any two composable homotopies $f \mrhpy{H} g \mrhpy{K} h$, we can find a single 
homotopy representing the class $[K,H]$. 
 The following lemma gives certain conditions under which this is the case, which are satisfied for the weak equivalences between fibrant-cofibrant objects of a model bicategory \cite{DDS.model_bicat}. 
The reader will probably recognize here an abstract setting corresponding to Quillen's proof of the transitivity of the left homotopy relation in \cite[Lemma 3]{Quillen}.

\begin{lemma} \label{lema:fittingparacompvert}
Assume that we have 
$
\xymatrix@C=7ex{X
\ar[r]^{f_1,f_2,f_3} & Y}$, and homotopies 
\mbox{$f_1 \mrhpy{H^1} f_2 \mrhpy{H^2} f_3$} as in Definition~\ref{defhpy}, with 
$Z^1 = Z^2 = Z$, $x^1 = x^2 = x$
fitting in the following diagram, where 
{$\nu^1,\, \nu^2$, $\gamma^1$, $\gamma^2$} are invertible 2-cells.         
$$\xymatrix@C=8ex{& W^1 \ar[rd]_{b^1} \ar@/^2ex/[rrd]|{\circ}^{s^1}_>>>>>>>>>>{\cong \, \nu^1} \ar@/^4ex/[rrrd]^{h^1}_>>>>>>>>>>>>>{\cong \,  \gamma^1} \\
X \ar[ru]^{d_1^1} \ar[rd]_{d^2_0} \ar@{}[rr]|{\Downarrow \delta} && W \ar[r]|{\circ}^{s} \ar@/_3ex/[rr]^>>>>>>>{h} & Z & Y \\
& W^2 \ar[ru]^{b^2} \ar@/_2ex/[rru]|{\circ}_{s^2}^>>>>>>>>>>{\cong \,  \nu^2} \ar@/_4ex/[rrru]_{h^2}^>>>>>>>>>>>>>{\cong \,  \gamma^2}}$$
Assume also that  
\smallskip

\noindent {\bf 1.}
The 2-cell $h^1 \ast d_1^1 \Mr{\eps^1} f_2 \Mr{\eta^2} h^2 \ast d^2_0$ equals 
$h^1 \ast d_1^1 \Xr{\gamma^1 \ast d_1^1} h \ast b^1 \ast d_1^1 \Xr{h \ast \delta} h \ast b^2 \ast d^2_0 \Xr{\gamma^2 \ast d^2_0} h^2 \ast d^2_0,$

\smallskip

\noindent {\bf 2.}
The 2-cell 
$s^1 \ast d_1^1 \Xr{(\alpha^1_1)} x \Xr{(\alpha^2_0)^{-1} \!\!\!\!\!\!\!\!} s^2 \ast d^2_0$ equals 
$s^1 \ast d_1^1 \Xr{\nu^1 \ast d_1^1} s \ast b^1 \ast d_1^1 \Xr{s \ast \delta} s \ast b^2 \ast d^2_0 \Xr{\nu^2 \ast d^2_0} s^2 \ast d^2_0.$

\smallskip

Then there exists a homotopy $H$ from $f_1$ to $f_3$ such that $[H] = [H^2,H^1]$.

\smallskip

Furthermore, $H$ can be constructed as follows:
consider first the cylinder $C$ given as  \mbox{$C = (W,Z,b^1 * d^1_0, b^2 * d_1^2, x, s, \alpha_0, \alpha_1)$,} with $\alpha_0$ and $\alpha_1$ defined as the compositions
$$\alpha_0: s * b^1 * d^1_0 \Xr{\nu^1 * d^1_0} s^1 * d^1_0 \Xr{\alpha^1_0} x, \quad \quad 
\alpha_1: s * b^2 * d_1^2 \Xr{\nu^2 * d_1^2} 
s^2 * d_1^2 \Xr{\alpha^2_1} x 
$$
Then $H$ is given by $H = (C,h,\eta,\eps)$, with
$\eta$ and $\eps$ defined as the compositions
$$\eta: f_1 \Mr{\eta^1} h^1 \ast d^1_0 \Xr{\gamma^1 \ast d^1_0} h \ast b^1 \ast d^1_0, \quad \quad \eps: h \ast b^2 \ast d_1^2 \Xr{\gamma^2 \ast d_1^2} h^2 \ast d_1^2 \Mr{\eps^2} f_3.$$
\end{lemma}

\begin{proof}
We have to show that, for every 2-functor $(\C,\cc{W}) \mr{F} (\cc{D},\Theta_q)$, $\widehat{FH} = \widehat{FH^2} \ \widehat{FH^1}$. 
Let $\widehat{FC}$, $\widehat{FC^1}$, $\widehat{FC^2}$ be the 2-cells considered in Remark \ref{def:FyHinducen2cell} for $H$, $H^1$, $H^2$. 
We begin by showing {\bf ($\triangle$)} $\widehat{FC} = (Fb^2 \ast  \widehat{FC^2}) \circ F(\delta) \circ (Fb^1 \ast  \widehat{FC^1})$.
By the definition of $\widehat{FC}$, it suffices to show that $Fs \ast ((Fb^2 \ast  \widehat{FC^2}) \circ F(\delta) \circ (Fb^1 \ast  \widehat{FC^1}) = F\alpha$. We compute:

$$
\vcenter{\xymatrix@C=0pc@R=1.5pc{ Fs \did && Fb^1 \did && Fd^1_0 \dcellbbis{\widehat{FC^1}} \\
Fs \did && Fb^1 \dl & \dc{F\delta} & \dr Fd_1^1 \\
Fs \did && Fb^2 \did && Fd^2_0 \dcellbbis{\widehat{FC^2}} \\
Fs && Fb^2 && Fd_1^2
}}
\vcenter{\xymatrix@R=0pc{{\bf 2.} \\ = }}
\vcenter{\xymatrix@C=0pc@R=1.2pc{ Fs \did && Fb^1 \did && Fd^1_0 \dcellbbis{\widehat{FC^1}} \\
Fs && Fb^1 && Fd_1^1 \did \\
& Fs^1 \cl{F\nu^1} &&& Fd_1^1 \\
&& Fx \ar@{-}[ul] \ar@{-}[urr] \ar@{}[u]|>>>>{\ \, F(\alpha^1_1)} \ar@{-}[dl] \ar@{-}[drr] \ar@{}[d]|>>>>{\ F(\alpha^2_0)^{-1}} \\
& Fs^2 \op{F\nu^2} &&& Fd^2_0 \did \\
Fs \did && Fb^2 \did && Fd^2_0 \dcellbbis{\widehat{FC^2}} \\
Fs && Fb^2 && Fd_1^2
}}  
\vcenter{\xymatrix@R=0pc{\eqref{ascensor}  \\ = }} \!\!\!\!\!\!
\vcenter{\xymatrix@C=0.1pc@R=1.2pc{ Fs && Fb^1 && Fd^1_0 \did \\
& Fs^1 \cl{F\nu^1} \did &&& Fd^1 \dcellbbis{\widehat{FC^1}} \\
& Fs^1 &&& Fd_1^1 \\
&& Fx \ar@{-}[ul] \ar@{-}[urr] \ar@{}[u]|>>>>{\ \, F(\alpha^1_1)} \ar@{-}[dl] \ar@{-}[drr] \ar@{}[d]|>>>>{\ F(\alpha^2_0)^{-1}} \\
& Fs^2 \did &&& Fd^2_0 \dcellbbis{\widehat{FC^2}} \\
& Fs^2 \op{F\nu^2} &&& Fd_1^2 \did  \\
Fs && Fb^2 && Fd_1^2
}}
\vcenter{\xymatrix@R=0pc{\ref{def:FyHinducen2cell}  \\ = }}
\!\!\!
\vcenter{\xymatrix@C=-.3pc@R=1.5pc{ Fs && Fb^1 && Fd^1_0 \did \\
& Fs^1 \cl{F\nu^1} &&& Fd^1_0 \\
&& Fx \ar@{-}[ul] \ar@{-}[urr] \ar@{}[u]|>>>>{\ F(\alpha^1_0)} \ar@<-1ex>@{-}[dl] \ar@<1ex>@{-}[drr] \ar@{}[d]|>>>>{\ F(\alpha^2_1)^{-1}} \\
& Fs^2 \op{F\nu^2} &&& Fd_1^2 \did  \\
Fs && Fb^2 && Fd_1^2
}}
$$

\noindent which equals $F\alpha$ by definition. 
It remains to show that 
$$F\eps \circ (Fh \ast  \widehat{FC}) \circ F\eta = F\eps^2 \circ (Fh^2 \ast  \widehat{FC^2}) \circ F\eta^2 \circ F\eps^1 \circ  (Fh^1 \ast  \widehat{FC^1}) \circ F\eta^1.$$
 Clearly by the definitions of $\eps$ and $\eta$ it suffices to show that 
$$F(\gamma^2 \ast  d_1^2) \circ (Fh \ast  \widehat{FC}) \circ F(\gamma^1*  d^1_0) = (Fh^2 \ast  \widehat{FC^2}) \circ F\eta^2 \circ F\eps^1 \circ (Fh^1 \ast  \widehat{FC^1})$$
We compute as follows:


\noindent
$
\vcenter{\xymatrix@C=-0.5pc@R=1.5pc{  & Fh^1 \op{F\gamma^1} &&& Fd^1_0 \did \\
Fh \did && Fb^1 \dl & \dc{\widehat{FC}} & \dr Fd^1_0 \\
Fh && Fb^2 && Fd_1^2 \did \\
& Fh^2 \cl{F\gamma^2} &&& Fd_1^2 }}
\vcenter{\xymatrix@R=0pc{{\bf (\triangle)} \\ = }}
\vcenter{\xymatrix@C=-0.3pc@R=1.5pc{ 
& Fh^1 \op{F\gamma^1} &&& Fd^1_0 \did \\
Fh \did && Fb^1 \did && Fd^1_0 \dcellbbis{\widehat{FC^1}} \\
Fh \did&& Fb^1 \dl & \dc{F\delta} & \dr Fd_1^1 \\
Fh \did && Fb^2 \did && Fd^2 \dcellbbis{\widehat{FC^2}} \\
Fh && Fb^2 && Fd_1^2 \did \\
& Fh^2 \cl{F\gamma^2} &&& Fd_1^2 
}}
\vcenter{\xymatrix@R=0pc{{\bf 1.} \\ = }}
\vcenter{\xymatrix@C=-0.3pc@R=1.2pc{ 
& Fh^1 \op{F\gamma^1} &&& Fd^1_0 \did \\
Fh \did && Fb^1 \did && Fd^1_0 \dcellbbis{\widehat{FC^1}} \\
Fh && Fb^1 && Fd_1^1 \did \\
& Fh^1 \cl{F\gamma^1} &&& Fd_1^1 \\
&& Ff_2 \ar@{-}[ul] \ar@{-}[urr] \ar@{}[u]|>>>>{\ \, F\eps^1} \ar@{-}[dl] \ar@{-}[drr] \ar@{}[d]|>>>>{\ F\eta^2} \\
& Fh^2 \op{F\gamma^2} &&& Fd^2_0 \did \\
Fh \did && Fb^2 \did && Fd^2_0 \dcellbbis{\widehat{FC^2}} \\
Fh && Fb^2 && Fd_1^2 \did \\
& Fh^2 \cl{F\gamma^2} &&& Fd_1^2 
}}
\vcenter{\xymatrix@R=0pc{\eqref{ascensor}  \\ = }}
\vcenter{\xymatrix@C=-0.3pc{Fh^1 \did && Fd^1_0 \dcellbbis{\widehat{FC^1}} \\
Fh^1 && Fd_1^1 \\
& Ff_2 \cl{F\eps^1} \op{F\eta^2} \\
Fh^2 \did && Fd^2_0 \dcellbbis{\widehat{FC^2}} \\
Fh^2 && Fd_1^2}}
$

\vspace{-.5cm}

\end{proof}

\end{appendix}

\end{document}